\newtheorem{Remark}{Remark}[section]
\newtheorem{Corollary}[Remark]{Corollary}
\newtheorem{Definition}[Remark]{Definition}
\newtheorem{Example}[Remark]{Example}
\newtheorem{Fact}[Remark]{Fact}
\newtheorem{Lemma}[Remark]{Lemma}
\newtheorem{Proposition}[Remark]{Proposition}
\newtheorem{Theorem}[Remark]{Theorem}
\newcommand{\ba}{\begin{array}}
\newcommand{\bc}{\begin{center}}
\newcommand{\bd}{\begin{description}}
\newcommand{\bdm}{\begin{displaymath}}
\newcommand{\be}{\begin{enumerate}}
\newcommand{\beq}{\begin{equation}}
\newcommand{\bdf}{\begin{Definition}}
\newcommand{\bex}{\begin{Example}}
\newcommand{\bft}{\begin{Fact}}
\newcommand{\bl}{\begin{Lemma}}
\newcommand{\bp}{\begin{Proposition}}
\newcommand{\br}{\begin{Remark}}
\newcommand{\bt}{\begin{Theorem}}
\newcommand{\bco}{\begin{Corollary}}
\newcommand{\bhy}{\begin{Hypothesis}}
\newcommand{\ea}{\end{array}}
\newcommand{\ec}{\end{center}}
\newcommand{\ed}{\end{description}}
\newcommand{\edm}{\end{displaymath}}
\newcommand{\ee}{\end{enumerate}}
\newcommand{\eeq}{\end{equation}}
\newcommand{\edf}{\end{Definition}}
\newcommand{\eex}{\end{Example}}
\newcommand{\eft}{\end{Fact}}
\newcommand{\el}{\end{Lemma}}
\newcommand{\ep}{\end{Proposition}}
\newcommand{\er}{\end{Remark}}
\newcommand{\et}{\end{Theorem}}
\newcommand{\eco}{\end{Corollary}}
\newcommand{\ehy}{\end{Hypothesis}}
\newcommand{\bH}{\mathbb{H}}
\newcommand{\bI}{\mathbb{I}}
\newcommand{\bR}{\mathbb{R}}
\newcommand{\bT}{\mathbb{T}}
\newcommand{\bV}{\mathbb{V}}
\newcommand{\bW}{\mathbb{W}}
\newcommand{\bZ}{\mathbb{Z}}
\newcommand{\cC}{\mathcal{C}}
\newcommand{\cL}{\mathcal{L}}
\newcommand{\cN}{\mathcal{N}}
\newcommand{\cO}{\mathcal{O}}
\newcommand{\cR}{\mathcal{R}}
\newcommand{\cT}{\mathcal{T}}
\newcommand{\cV}{\mathcal{V}}
\newcommand{\cW}{\mathcal{W}}
\numberwithin{equation}{section} \errorcontextlines=0
\newcommand{\codim}{\mathrm{ codim  }}
\newcommand{\sub}{\overline{\mathrm{sub}}}
\newcommand{\bif}{\mathcal{BIF}}
\begin{document}

\title[Global bifurcation for elliptic systems with symmetries]{Global bifurcation of solutions to elliptic systems with system and domain symmetries}

\author{Piotr Stefaniak}
\address{Faculty of Mathematics and Computer Science \\ Nicolaus Copernicus University in Toru\'n\\
PL-87-100 Toru\'{n} \\ ul. Chopina $12 \slash 18$ \\
Poland}

\email{cstefan@mat.umk.pl (P. Stefaniak)}

\date{\today}

\keywords{global bifurcation, symmetric elliptic systems, equivariant degree}
\subjclass[2020]{Primary 35B32; Secondary 35J47, 35B06, 58J05}

\begin{abstract}
We study parameterized elliptic systems on symmetric domains with additional system symmetries. We prove the existence of continua of nontrivial solutions bifurcating from the constant branch determined by a critical point of the potential, without assuming nondegeneracy, via the degree for equivariant gradient maps. Our assumptions are formulated in terms of the right–hand side. When the domain is a compact symmetric space, the bifurcating solutions break symmetry at every nonzero level. Under additional assumptions on the right–hand side, the continua are unbounded.
\end{abstract}

\maketitle

\section{Introduction}
We study global bifurcations of weak solutions to nonlinear elliptic systems on domains with symmetries, where the systems themselves also carry symmetries. For a compact Riemannian manifold $M$ (possibly with boundary) and a potential $F:\bR^{p}\times\bR\to\bR$, we consider
\begin{equation}\label{eq:sys-intr}
-\Delta_{M}u=\nabla_{u}F(u,\lambda),
\end{equation}
where $\lambda\in\bR$, with Neumann boundary conditions when $\partial M\ne\emptyset$. Here $M$ is the domain, while $\bR^p$ is the value space of the unknown $u$. The precise assumptions used throughout the paper are collected in Subsection~\ref{subsec:system}.

Bifurcation theory provides tools for detecting nontrivial solutions which emanate from a branch of trivial solutions. In elliptic problems, local and global bifurcation results have been developed in several directions, including bifurcation from simple eigenvalues and, separately, global continuation results based on topological degree; see, for example, \cite{CrandallRabinowitz,Nirenberg,Rabinowitz1971,Rabinowitz1986}. The global point of view gives information on the continuation of bifurcating branches away from a neighbourhood of the bifurcation point, by detecting connected sets of nontrivial solutions. Bifurcation phenomena in symmetric problems, including symmetry breaking, have also been studied from several points of view. Classical symmetry-breaking results were obtained, for example, by Dancer and by Smoller and Wasserman; see \cite{Dancer1979,Dancer1982,SmollerWasserman}.

For the approach followed here, the presence of symmetries leads to a specific limitation of the ordinary degree. For an equivariant completely continuous perturbation of the identity, the Leray--Schauder degree is determined by the restriction to the fixed-point subspace, see, for example, \cite{Dancer1982}. Thus changes in non-fixed directions may be invisible. To keep track of such changes one has to use invariants which take the group action into account. In the gradient setting, finite-dimensional constructions of the equivariant gradient degree are discussed in \cite{Geba}, while its infinite-dimensional version has been developed in \cite{Ryb2005milano}. For related approaches to bifurcation based on equivariant Conley index methods, see for example \cite{GolRyb2013,Izydorek}. A different, but related, invariant used in bifurcation theory is equivariant spectral flow, see \cite{IJSW2025}.

We now turn to the variational and equivariant structure of the present problem. We assume that, for every $\lambda$, the potential $F(\cdot,\lambda)$ has a critical point. After a translation we take this critical point to be $0$, and hence \eqref{eq:sys-intr} has a branch of trivial constant solutions. The problem is variational: under our assumptions, weak solutions of \eqref{eq:sys-intr} are critical points of a $C^2$ functional $\Phi:\bH\times\bR\to\bR$ defined on a Hilbert space $\bH$, and $\nabla_u\Phi$ is a completely continuous perturbation of the identity.

In the problem considered here there are two sources of symmetry. A compact Lie group acts on the value space $\bR^p$ through the symmetries of the potential, while another compact Lie group acts on the domain $M$. These symmetries are inherited by both $\bH$ and $\Phi$, and the induced action on the function space is a product action. We therefore work with the degree for equivariant gradient maps, whose values lie in the Euler ring of the acting product group.

The critical point on the trivial branch is not assumed to be nondegenerate. Here nondegeneracy means that the Hessian $\nabla_u^2F(0,\lambda)$ is nonsingular. Instead, we assume nontriviality of the equivariant gradient degree of $\nabla_uF(\cdot,\lambda)$ near $0$. This includes the standard nondegenerate case and the case of nonzero Brouwer degree, but also allows situations in which the ordinary Brouwer degree vanishes while the equivariant degree remains nontrivial; see Remark~\ref{rem:B6}.

As a first step in the bifurcation analysis, we identify the admissible parameter values, namely those at which bifurcation from the trivial branch may occur; see Lemma~\ref{lem:nec}. In the nondegenerate case this follows from the implicit function theorem, while in the degenerate case it is obtained from the equivariant structure of the linearized problem. This gives the candidate set of bifurcation parameters.

To obtain bifurcation results, we introduce an equivariant bifurcation index defined by means of the degree for equivariant gradient maps. Nontriviality of this index implies global bifurcation; see Theorem~\ref{thm:bif-index-global}. The main point is to prove that the index is nonzero in the present setting. Its computation requires working in the Euler ring of the acting product group. This is, in general, difficult to handle, since the multiplicative structure of such an Euler ring is not fully known. For tori, however, the situation is much better: explicit multiplication rules are available, see \cite{GarRyb}. We therefore pass to maximal tori in the symmetry groups of the domain and of the system and carry out all computations in the Euler ring of their product, which is again a torus. This reduction is used to prove nontriviality of the bifurcation index. In general, it does not keep all the isotropy information of the original group action, but it gives the information needed for the global bifurcation argument. The key tools in the computation are the tensor product of representations (Lemma~\ref{lem:external-tensor}) and its decomposition into torus representations (Lemma~\ref{lem:T-product-decomp}).

The resulting global bifurcation theorems are Theorems~\ref{thm:global-bifurcation} and~\ref{thm:global-bifurcation-trivial}. If the necessary condition of Lemma~\ref{lem:nec} is not satisfied, Theorem~\ref{thm:local-or-global} gives a local--global alternative: either local bifurcation occurs near the parameter in question, or a change of degree forces global bifurcation at an intermediate parameter value.

We next turn our attention to compact symmetric spaces $M=G/H$. Using the structure of Laplace--Beltrami eigenspaces as torus representations, we show that global bifurcation occurs at every candidate parameter value, with the possible exception of $\lambda=0$. At each nonzero level the bifurcating solutions break the domain symmetry: their $G$-isotropy group is strictly smaller than $G$; see Theorem~\ref{thm:global-bifurcation-symmetric}. In particular, for $G=\mathrm{SO}(n)$ and $H=\mathrm{SO}(n-1)$, so that $M=S^{n-1}$, this yields bifurcation of nonradial solutions from the radial (constant) ones.

The last result concerns the structure of the bifurcating continua. In Theorem~\ref{thm:unbounded} we prove that, under assumptions on the eigenspaces of the Hessian of $F$ and with $0$ nondegenerate, every bifurcating continuum is unbounded. The proof uses Lemma~\ref{lem:highest} to identify a torus weight which appears at a given bifurcation level and not at the preceding ones. This allows us to find a nonzero coefficient in the bifurcation index which cannot be cancelled in the summation formula from the Rabinowitz alternative (Theorem~\ref{thm:altRab_sum_T}).

After this introduction, the article is organized as follows. In Section~\ref{reptor} we recall basic notions of representation theory for compact Lie groups, then restrict to torus actions and record the decomposition rules needed later. Subsection~\ref{subsec:Eringtorus} reviews the Euler ring of a torus and establishes several algebraic identities used in the bifurcation-index computations. Subsection~\ref{subsec:degree} summarizes the degree for equivariant gradient maps and its properties.
In Subsection~\ref{subsec:system} we introduce the elliptic system \eqref{eq:GH}--\eqref{eq:GH-Neumann}, list the standing assumptions \textnormal{(B1)}--\textnormal{(B6)} and set up the variational and symmetry framework. In Subsection~\ref{subsec:global} we analyze the bifurcation problem, deriving the necessary condition and proving sufficient conditions. In Subsection~\ref{subsec:symmetry-breaking} we establish symmetry breaking for the bifurcating solutions. Finally, in Subsection~\ref{subsec:unbounded} we specialize to compact Riemannian symmetric spaces, where we prove global bifurcation with symmetry breaking and the unboundedness of the bifurcating continua.

\section{Preliminaries}\label{sec:prelim}

In this section we collect the basic tools used later in the paper. Most of the material is standard, but we also include several auxiliary lemmas tailored to our setting: in particular, we describe the decomposition of tensor products of torus representations and establish algebraic identities in the Euler ring that will be used in the bifurcation index computations.

\subsection{Elements of representation theory}\label{reptor}

Let $G$ be a compact Lie group. An orthogonal $G$-representation is a pair $(\bV,\rho)$ with $\bV$ a finite-dimensional real vector space and $\rho\colon G\to O(\bV)$ a continuous homomorphism. We do not distinguish between the representation and its underlying space, writing simply $\bV$. When clear, we denote the action either by $gv$ or by $g\cdot v$, with $gv=g\cdot v=\rho(g)v$ for $g\in G$, $v\in\bV$. A $G$-representation is called irreducible if it has no nonzero proper $G$-invariant subspace (i.e., the only $G$-invariant subspaces are $\{0\}$ and $\bV$).

Two $G$-representations $(\bV,\rho_1)$ and $(\bW,\rho_2)$ are equivalent if there exists a linear isomorphism $T:\bV\to\bW$ which is $G$-equivariant, i.e. such that
$T\circ\rho_1(g)=\rho_2(g)\circ T$ for all $g\in G$.
We denote this by $\bV\approx_G\bW$.

Let $\Gamma$ and $G$ be compact Lie groups. Let $(\bW,\rho_1)$ be a finite-dimensional orthogonal $\Gamma$-representation and $(\bV,\rho_2)$ a finite-dimensional orthogonal $G$-representation.
 Put $N=\dim\bW$ and equip the orthogonal direct sum $\bigoplus_{i=1}^{N}\bV$ with the $(\Gamma\times G)$–action
\[
(\gamma,g)\cdot(v_{1},\dots,v_{N})
:=\rho_{1}(\gamma)\bigl(\rho_{2}(g)v_{1},\dots,\rho_{2}(g)v_{N}\bigr),
\]
i.e. apply $\rho_2(g)$ componentwise, then let
$\rho_1(\gamma)$ act on the list of components via its action on $\bW$.
On the tensor product $\bW\otimes\bV$ we take the product action
\[
(\gamma,g)\cdot(w\otimes v):=(\rho_{1}(\gamma)w)\otimes(\rho_{2}(g)v).
\]

\begin{Lemma}\label{lem:external-tensor}
With the above notation, the orthogonal $\Gamma\times G$–representations $\bigoplus_{j=1}^{N}\bV$ and $\bW\otimes\bV$ are equivalent.
\end{Lemma}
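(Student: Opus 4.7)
The plan is to construct an explicit $(\Gamma\times G)$-equivariant linear isomorphism by fixing an orthonormal basis of $\bW$. Choose $w_{1},\dots,w_{N}\in\bW$ orthonormal and define
\[
T\colon \bigoplus_{j=1}^{N}\bV \longrightarrow \bW\otimes\bV,\qquad
T(v_{1},\dots,v_{N})=\sum_{j=1}^{N} w_{j}\otimes v_{j}.
\]
Bijectivity is immediate: if $e_{1},\dots,e_{m}$ is any basis of $\bV$, then $T$ sends the standard basis of $\bigoplus_{j=1}^{N}\bV$ consisting of tuples with one $e_{k}$ in the $j$-th slot to the product basis $\{w_{j}\otimes e_{k}\}$ of $\bW\otimes\bV$. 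Moreover, with the standard inner products on both sides (sum of the inner products on each summand on the left; the tensor-product inner product on the right), $T$ is an isometry, so it is an equivalence of \emph{orthogonal} representations.

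The only content is the equivariance check. For $G$-equivariance, both actions are simply $\rho_{2}(g)$ applied slot-by-slot, so
\[
T\bigl(g\cdot(v_{1},\dots,v_{N})\bigr)=\sum_{j} w_{j}\otimes\rho_{2}(g)v_{j}=(1\otimes\rho_{2}(g))\sum_{j}w_{j}\otimes v_{j}=g\cdot T(v_{1},\dots,v_{N}).
\]
For $\Gamma$-equivariance write the matrix of $\rho_{1}(\gamma)$ in the basis $(w_{j})$ as $\rho_{1}(\gamma)w_{j}=\sum_{i}a_{ij}(\gamma)w_{i}$. Unpacking the action on $\bigoplus_{j=1}^{N}\bV$ from the statement, the $i$-th component of $\gamma\cdot(v_{1},\dots,v_{N})$ is $\sum_{j}a_{ij}(\gamma)v_{j}$, hence
\[
T\bigl(\gamma\cdot(v_{1},\dots,v_{N})\bigr)=\sum_{i}w_{i}\otimes\Bigl(\sum_{j}a_{ij}(\gamma)v_{j}\Bigr)=\sum_{i,j}a_{ij}(\gamma)\,w_{i}\otimes v_{j},
\]
which equals $\sum_{j}(\rho_{1}(\gamma)w_{j})\otimes v_{j}=\gamma\cdot T(v_{1},\dots,v_{N})$.

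There is no real obstacle here; the only place where care is needed is to reconcile the componentwise description of the $\Gamma$-action on $\bigoplus_{j=1}^{N}\bV$ with the matrix expression of $\rho_{1}(\gamma)$ in the chosen basis. Once this bookkeeping is in place, the computations above are forced and the proof is complete.
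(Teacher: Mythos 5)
Your proof is correct and uses essentially the same isomorphism as the paper (the paper defines the inverse map $\bW\otimes\bV\to\bigoplus_{j=1}^{N}\bV$ sending $w_{j}\otimes v$ to the tuple with $v$ in the $j$-th slot and asserts equivariance without detail). You simply go in the opposite direction and, usefully, make the equivariance bookkeeping explicit, including the isometry remark, which the paper leaves implicit.
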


\begin{proof}
Fix an orthonormal basis $(w_{1},\dots,w_{N})$ of $\bW$ and define an isomorphism
\[
T: \bW\otimes\bV\longrightarrow \bigoplus_{j=1}^{N}\bV,\qquad
T(w_{j}\otimes v):=(0,\dots,0,\ \text{$v$ in the $j$-th slot}\ ,0,\dots,0),
\]
extended linearly. From the definition of the $\Gamma\times G$–actions,
it easily follows that $T$ is $\Gamma\times G$–equivariant.
\end{proof}

\subsubsection{Torus representations}
We now restrict attention to torus actions, which will be used in the sequel.
Let $T^{r}$ be a torus of rank $r$. For $m\in\mathbb{Z}^{r}$ denote by $\bR[1,m]$ the real two--dimensional orthogonal $T^{r}$--representation in which
$e^{i\phi}=(e^{i\phi_1},\dots,e^{i\phi_r})\in T^r$ acts on $\mathbb{R}^{2}$ as the rotation through the angle $\langle m,\phi\rangle$, where $\phi=(\phi_1,\dots,\phi_r)\in\mathbb{R}^{r}$:
\[
\bR[1,m]:=\Bigl(\mathbb{R}^{2},\ \phi\mapsto
\begin{pmatrix}
\cos\langle m,\phi\rangle & -\sin\langle m,\phi\rangle\\
\sin\langle m,\phi\rangle & \ \cos\langle m,\phi\rangle
\end{pmatrix}\Bigr).
\]
For $k\in\mathbb{N}$ put $\bR[k,m]:=\bigoplus_{j=1}^{k}\bR[1,m]$. In the case $m=0$ we write $\bR[k,0]:=\mathbb{R}^{k}$ with the trivial action.

As is standard (see, for example, \cite[Proposition~II.8.5]{BrockerDieck}), every real irreducible $T^r$–rep\-re\-sen\-tation is equivalent to $\bR[1,m]$ for some $m\in\mathbb{Z}^r$ and $\bR[1,m]\approx_{T^r}\bR[1,m']$ precisely when $m'=\pm m$.
Consequently, every finite–dimensional orthogonal $T^r$–representation decomposes as follows.

\begin{Corollary}\label{cor:repT}
Let $\bV$ be a finite–dimensional orthogonal $T^r$-representation. Then there exist $m_1,\dots,m_l\in \mathbb{Z}^r\setminus\{0\}$ and integers $l\ge 0$, $k_0\ge 0$, $k_1,\dots,k_l\ge 1$ such that
\[
\bV \approx_{T^r} \bR[k_0,0] \oplus \bR[k_1,m_1] \oplus \cdots \oplus \bR[k_l,m_l].
\]
\end{Corollary}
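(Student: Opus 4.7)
The plan is to invoke complete reducibility of orthogonal representations of compact Lie groups and then apply the classification of real irreducible $T^r$-representations recalled in the paragraph preceding the statement.

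First I would note that, since $T^r$ is a compact Lie group and $\bV$ carries a $T^r$-invariant inner product (it is orthogonal), the standard averaging argument gives complete reducibility: any $T^r$-invariant subspace has a $T^r$-invariant orthogonal complement. Induction on $\dim\bV$ then yields a decomposition
\[
\bV \approx_{T^r} \bU_1 \oplus \cdots \oplus \bU_s
\]
into irreducible orthogonal $T^r$-subrepresentations.

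Next I would use the classification: each $\bU_i$ is equivalent to $\bR[1,n_i]$ for some $n_i\in\mathbb{Z}^r$, and $\bR[1,n]\approx_{T^r}\bR[1,n']$ iff $n'=\pm n$. Separate the indices into those with $n_i=0$ and those with $n_i\ne 0$. Let $k_0$ be the number of summands with $n_i=0$; these contribute $\bR[k_0,0]$. For the remaining summands, choose a set of representatives $m_1,\dots,m_l\in\mathbb{Z}^r\setminus\{0\}$ for the equivalence classes $\{\pm n_i\}$ that actually occur, and let $k_j\ge 1$ be the number of summands equivalent to $\bR[1,m_j]$. Regrouping equivalent summands yields the desired decomposition
\[
\bV \approx_{T^r} \bR[k_0,0] \oplus \bR[k_1,m_1] \oplus \cdots \oplus \bR[k_l,m_l].
\]

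There is no real obstacle here; the result is a direct consequence of two standard facts (complete reducibility and the classification of real irreducible torus representations), and the only minor bookkeeping point is to pick a representative from each pair $\{m,-m\}$ so that the list $m_1,\dots,m_l$ is well-defined. For that reason I would keep the proof short and mainly cite \cite[Proposition~II.8.5]{BrockerDieck} for the classification step.
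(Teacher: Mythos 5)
Your proof is correct and follows exactly the route the paper has in mind: the Corollary is stated as an immediate consequence of complete reducibility for compact groups together with the classification of real irreducible $T^r$-representations cited just before it. Nothing to add.
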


For  $m=(m_1,\dots,m_r)\in\bZ^{r}$ and $n=(n_1,\dots,n_l)\in\bZ^{l}$, by $(m,n)$ we understand $(m_1,\dots,m_r,n_1,\dots,n_l)\in\bZ^{r+l}$. This convention will be used throughout the paper.

The following lemma records the $T^{r+l}$–decomposition of a tensor product of torus representations.

\begin{Lemma}\label{lem:T-product-decomp}
Let $\bW$ be a $T^{r}$–representation and $\bV$ a $T^{l}$–representation with
\[
\bW\approx_{T^r} \bR[l_{0},0]\oplus \bigoplus_{m\in\bZ^{r}\setminus\{0\}} \bR[l_{m},m],
\qquad
\bV\approx_{T^l} \bR[k_{0},0]\oplus \bigoplus_{n\in\bZ^{l}\setminus\{0\}} \bR[k_{n},n].
\]
Then
\begin{eqnarray*}
\bW\otimes\bV &\approx_{T^{r+l}}&\bR[k_{0}l_{0},(0,0)]
\oplus \bigoplus_{m\in\bZ^{r}\setminus\{0\}}\bR[k_{0}l_{m},(m,0)]
\oplus \bigoplus_{n\in\bZ^{l}\setminus\{0\}}\bR[k_{n}l_{0},(0,n)]
\\
&&\oplus \bigoplus_{m\in\bZ^{r}\setminus\{0\},  n\in\bZ^{l}\setminus\{0\}}
\bigl(\bR[k_{n}l_{m},(m,n)]\oplus \bR[k_{n}l_{m},(m,-n)]\bigr).
\end{eqnarray*}
\end{Lemma}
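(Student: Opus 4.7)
The plan is to reduce the problem to computing tensor products of individual irreducible summands and then to collect multiplicities. Since tensor product distributes over direct sums and is compatible with the external $T^{r+l}$-action, one has
\[
\bW\otimes\bV \approx_{T^{r+l}}
\bigoplus_{(m,n)}\bigl(\bR[1,m]\otimes\bR[1,n]\bigr)^{\oplus \mu(m,n)},
\]
where the outer sum runs over pairs of labels appearing in the decompositions of $\bW$ and $\bV$ (including $m=0$ and $n=0$) and $\mu(m,n)$ is the product of the two multiplicities. Hence it suffices to determine $\bR[1,m]\otimes\bR[1,n]$ as a $T^{r+l}$-representation in each of the four cases distinguished by whether $m$ or $n$ vanishes, and then to collect coefficients.

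Three of these cases are immediate. For $m=n=0$ one has $\bR\otimes\bR=\bR=\bR[1,(0,0)]$. When exactly one of $m$, $n$ is zero, the tensor with a trivial line simply reinterprets the rotation representation as a $T^{r+l}$-representation whose action on the extra torus coordinates is trivial, producing $\bR[1,(m,0)]$ or $\bR[1,(0,n)]$. The only substantive case is $m\neq 0$, $n\neq 0$, and here I would complexify. Fixing an orthonormal basis $e_{1},e_{2}$ of $\bR[1,m]$, a direct computation shows that in $\bR[1,m]\otimes_{\bR}\bC$ the vectors $e_{1}\mp ie_{2}$ are complex eigenvectors with weights $\pm m$, so the complexification of $\bR[1,m]$ splits as the sum of two one-dimensional complex weight spaces of weights $m$ and $-m$. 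Compatibility of complexification with tensor product then yields a four-dimensional complex decomposition of $\bigl(\bR[1,m]\otimes_{\bR}\bR[1,n]\bigr)\otimes_{\bR}\bC$ whose weights are $\pm(m,n)$ and $\pm(m,-n)$.

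Complex conjugation acts on weights by negation; since $m\neq 0$ and $n\neq 0$, it pairs $(m,n)$ with $(-m,-n)$ and $(m,-n)$ with $(-m,n)$ without fixing any weight. Each complex-conjugate pair of one-dimensional complex weight spaces descends to a single two-dimensional real irreducible $T^{r+l}$-rep\-re\-sen\-ta\-tion, which by the classification recalled just before Corollary~\ref{cor:repT} must be $\bR[1,(m,n)]$ and $\bR[1,(m,-n)]$ respectively. Thus $\bR[1,m]\otimes\bR[1,n]\approx_{T^{r+l}}\bR[1,(m,n)]\oplus\bR[1,(m,-n)]$. Substituting this back into the distributivity identity and reading off coefficients yields the statement. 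The main technical subtlety is precisely the real-from-complex identification in the last step; a purely character-theoretic alternative is to expand
\[
2\cos\langle m,\phi\rangle\cdot 2\cos\langle n,\psi\rangle = 2\cos\langle (m,n),(\phi,\psi)\rangle + 2\cos\langle (m,-n),(\phi,\psi)\rangle,
\]
and invoke the fact that characters determine orthogonal $T^{r+l}$-representations up to equivalence.
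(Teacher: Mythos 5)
Your argument is correct, and it reaches the same reduction as the paper (distributivity plus the four cases by whether $m$ or $n$ vanishes), but it handles the only nontrivial case $m\neq 0,\ n\neq 0$ by a genuinely different method. The paper's proof writes down explicit $T^{r+l}$-equivariant isometries $T_\pm\colon\bR[1,(m,\pm n)]\to\bR[1,m]\otimes\bR[1,n]$ on the standard basis and verifies directly that their ranges are invariant, orthogonal, and span the tensor product; the burden there is a short trigonometric computation showing equivariance, but everything stays over $\bR$ and is completely elementary. You instead complexify, read off the weights $\pm(m,n)$ and $\pm(m,-n)$ of the complexified tensor product, and then descend conjugate pairs of weight lines to real two-dimensional irreducibles, invoking the classification recalled before Corollary~\ref{cor:repT}; the character identity $4\cos\langle m,\phi\rangle\cos\langle n,\psi\rangle=2\cos\langle(m,n),(\phi,\psi)\rangle+2\cos\langle(m,-n),(\phi,\psi)\rangle$ packages the whole case in one line. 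Your approach buys conceptual transparency (weights are additive under tensor, conjugation negates weights) and scales painlessly to higher tensor powers; the paper's approach buys self-containedness, since it never leaves the real orthogonal category and produces the intertwiners explicitly rather than asserting their existence from a character match. One small point worth making explicit in your write-up: when identifying each conjugate pair with $\bR[1,(m,n)]$ resp.\ $\bR[1,(m,-n)]$, you should either match characters (as in your closing formula) or note that a conjugate pair of one-dimensional complex weight spaces with weight $w$ carries real character $2\cos\langle w,\cdot\rangle$, which pins down the real form uniquely by the classification; merely saying ``descends to a two-dimensional real irreducible'' leaves the identification implicit.
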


\begin{proof}
By distributivity of $\otimes$ it suffices to show, for $m\in\bZ^{r}$ and $n\in\bZ^{l}$,
\[
\bR[1,m]\otimes \bR[1,n]\approx_{T^{r+l}} \begin{cases}
\bR[1,(m,n)]\oplus \bR[1,(m,-n)] & m\ne 0,\ n\ne 0,\\
\bR[1,(m,0)]                     & n=0,\ m\ne 0,\\
\bR[1,(0,n)]                     & m=0,\ n\ne 0,\\
\bR[1,0]                         & m=0,\ n=0.
\end{cases}
\]
For $m\ne0$, $n\ne0$, define $T^{r+l}$–equivariant maps
$$
T_\pm: \bR[1,(m,\pm n)]\longrightarrow \bR[1,m]\otimes \bR[1,n]
$$
by
\[
\begin{aligned}
&T_+(e_1):=e_1\otimes e_1-e_2\otimes e_2,\quad &&T_+(e_2):=e_1\otimes e_2+e_2\otimes e_1,\\
&T_-(e_1):=e_1\otimes e_1+e_2\otimes e_2,\quad &&T_-(e_2):=e_1\otimes e_2-e_2\otimes e_1.
\end{aligned}
\]
where $e_1,e_2$ is the standard basis of $\bR^2$. Their images are $T^{r+l}$–invariant, orthogonal, and
the direct sum of their ranges equals $\bR[1,m]\otimes \bR[1,n]$,
giving the first case. The remaining cases are immediate. Collecting multiplicities in the distributive expansion yields the stated decomposition.
\end{proof}

\subsection{Euler ring of a torus}\label{subsec:Eringtorus}

Let $\sub(T^{r})$ be the set of closed subgroups of $T^{r}$. For $H\in\sub(T^{r})$ let $T^{r}/H$ denote the orbit space, and write $T^{r}/H^{+}$ for the pointed space obtained by adjoining a disjoint base point. Let $\chi_{T^{r}}(X)$ denote the $T^{r}$–equivariant Euler characteristic of a $T^{r}$–CW complex $X$. The Euler ring of the torus $U(T^{r})$ is the free $\mathbb{Z}$–module generated by the elements $\chi_{T^{r}}(T^{r}/H^{+})$ with $H\in\sub(T^{r})$. The multiplication in $U(T^r)$ is induced by the smash product of pointed $T^r$–CW complexes.
For tori this product is explicit (see \cite{GarRyb}): if $H,H'\in\sub(T^r)$ and $H'':=H\cap H'$, then
\begin{equation}\label{eq:UTmultiplication}
\chi_{T^r}(T^r/H^{+})\star\chi_{T^r}(T^r/H'^{+})=
\begin{cases}
\chi_{T^r}(T^r/H''^{+}), & \dim H+\dim H'=\dim T^r+\dim H'',\\[2pt]
\Theta, & \dim H+\dim H'<\dim T^r+\dim H'',
\end{cases}
\end{equation}
with unit $\bI:=\chi_{T^r}\bigl((T^r/T^r)^{+}\bigr)$ and zero element $\Theta$.

Directly from \eqref{eq:UTmultiplication} we get:

\begin{Corollary}\label{cor:bigcodim}
Let $X,Y\in U(T^r)$ with $Y$ a $\mathbb{Z}$–linear combination of terms $\chi_{T^r}(T^r/H^{+})$ with $\dim H\le r-2$.
Then $X\star Y$ is again a $\mathbb{Z}$–linear combination of terms $\chi_{T^r}(T^r/H^{+})$ with $\dim H\le r-2$.
\end{Corollary}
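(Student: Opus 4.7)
The plan is to reduce everything to a single basis product via $\mathbb{Z}$-linearity of $\star$ and then read off the required dimension bound directly from the explicit multiplication rule \eqref{eq:UTmultiplication}. Since the Euler ring is a free $\mathbb{Z}$-module on the generators $\chi_{T^r}(T^r/H^{+})$ and $\star$ is $\mathbb{Z}$-bilinear, it suffices to prove the claim in the case where $X=\chi_{T^r}(T^r/H^{+})$ for one fixed $H\in\sub(T^{r})$ and $Y=\chi_{T^r}(T^r/H'^{+})$ for one fixed $H'\in\sub(T^{r})$ with $\dim H'\le r-2$.

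For such a single product, \eqref{eq:UTmultiplication} says the result is either $\Theta$, in which case the conclusion is trivial (the zero element is the empty $\mathbb{Z}$-linear combination), or it equals $\chi_{T^r}(T^r/H''^{+})$ with $H''=H\cap H'$, and in this second case the nonzero product is accompanied by the dimension identity
\[
\dim H+\dim H' \;=\;\dim T^{r}+\dim H''\;=\;r+\dim H''.
\]
So the only step with content is to verify that $\dim H''\le r-2$.

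This is immediate from the previous display: using $\dim H\le r$ (as $H\subseteq T^{r}$) and the hypothesis $\dim H'\le r-2$, one gets
\[
\dim H''\;=\;\dim H+\dim H'-r\;\le\;r+(r-2)-r\;=\;r-2,
\]
so $\chi_{T^r}(T^r/H''^{+})$ indeed belongs to the distinguished sub-$\mathbb{Z}$-module spanned by generators of codimension at least $2$. Linearly extending, $X\star Y$ is a $\mathbb{Z}$-linear combination of such generators, as required.

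There is no real obstacle here; the statement is essentially a bookkeeping consequence of \eqref{eq:UTmultiplication}. The only point worth flagging is that one must use the nontrivial-case dimension equality \emph{before} bounding, since in the $\Theta$-case the inequality $\dim H+\dim H'<r+\dim H''$ does not by itself control $\dim H''$; but in that case there is nothing to prove.
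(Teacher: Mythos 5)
Your argument is correct and is essentially the same as the paper's (which simply asserts the corollary follows directly from \eqref{eq:UTmultiplication}); you have just spelled out the bilinearity reduction and the dimension bookkeeping $\dim H'' = \dim H + \dim H' - r \le r-2$ that the paper leaves implicit.
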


For $k\in\bZ^{r}$ define
$
H_k:= \{e^{i\phi}\in T^r: e^{i\langle k,\phi\rangle}=1 \}.
$
Arguing as in \cite[Remark 2.9]{GolRybSte2025}, every $H\in\sub(T^r)\setminus\{T^r\}$ is an intersection of codimension-$1$ subgroups.
In the next lemma we show that any $H\in\sub(T^r)\setminus\{T^r\}$ can be written as the intersection of exactly $\codim H$ such subgroups $H_k$.

\begin{Lemma}\label{lem:codimH}
Let $H \in \sub(T^r)$ be of codimension $s$. Then there exist
$k_1,\dots,k_s \in \mathbb{Z}^r\setminus\{0\}$ such that
$H=H_{k_1}\cap\ldots\cap H_{k_s}.$
\end{Lemma}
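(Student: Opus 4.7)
The plan is to realize $H$ as the common kernel of a finite set of characters of $T^r$, relying on Pontryagin duality. Writing $\chi_k(e^{i\phi}):=e^{i\langle k,\phi\rangle}$ for the character of $T^r$ indexed by $k\in\mathbb{Z}^r$, we note that $H_k=\ker\chi_k$, and associate to $H$ its annihilator lattice
\[
H^\perp:=\{k\in\mathbb{Z}^r:\chi_k|_H\equiv 1\}\subseteq\mathbb{Z}^r.
\]
As a subgroup of a finitely generated free abelian group, $H^\perp$ is itself free abelian. The lemma then reduces to two claims: (a) $\rank H^\perp=s$; (b) $H=\bigcap_{k\in H^\perp}H_k$. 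Given these, any $\mathbb{Z}$-basis $k_1,\dots,k_s$ of $H^\perp$ consists of nonzero vectors and, because $\chi_{k+k'}=\chi_k\chi_{k'}$, satisfies $\bigcap_{i=1}^{s} H_{k_i}=\bigcap_{k\in H^\perp}H_k$, which by (b) equals $H$.

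For (a), the substantive step, I would consider the quotient map $\pi:T^r\to T^r/H$. The target is a compact abelian Lie group of dimension $s$, hence isomorphic to $T^s\times F$ for some finite abelian group $F$ by the classification of compact abelian Lie groups; its Pontryagin dual is then $\widehat{T^r/H}\cong\mathbb{Z}^s\oplus\widehat{F}$, of rank $s$. Pullback along $\pi$ embeds this dual into $\widehat{T^r}=\mathbb{Z}^r$ with image exactly $H^\perp$, which therefore has rank $s$.

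Claim (b) is a formal character argument. The inclusion $H\subseteq\bigcap_{k\in H^\perp}H_k$ is immediate from the definition of $H^\perp$. Conversely, if $x\in\bigcap_{k\in H^\perp}H_k$ then $\pi(x)\in T^r/H$ lies in the kernel of every continuous character, and since a compact Hausdorff abelian group is separated by its characters (Peter--Weyl / Pontryagin duality), $\pi(x)$ is trivial, so $x\in H$. The only genuinely non-formal input is the rank identity in step (a); the corner case $s=0$ (which forces $H=T^r$) is handled by reading the empty intersection as the ambient $T^r$.
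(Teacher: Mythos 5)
Your proof is correct, and it takes a genuinely different route from the paper's. The paper starts from the fact (cited just before the lemma) that every proper closed subgroup of $T^r$ is an intersection of codimension-one subgroups $H_{l_1}\cap\dots\cap H_{l_p}$, forms the $p\times r$ integer matrix $R$ with rows $l_1,\dots,l_p$, applies Smith normal form $D=PRQ$, and reads off the explicit vectors $k_j = d_j(Q^{-1})^{T}e_j$ after a unimodular change of coordinates — so the number $s$ of constraints and the vectors $k_j$ are produced by hand. You instead invoke Pontryagin duality directly: you define the annihilator lattice $H^\perp\subset\mathbb{Z}^r$, show $\rank H^\perp = s$ by identifying $H^\perp$ with $\widehat{T^r/H}$, prove $H=\bigcap_{k\in H^\perp}H_k$ via separation of points by characters, and then observe that any $\mathbb{Z}$-basis of $H^\perp$ suffices because the $H_k$ for $k$ ranging over a lattice coincide with the $H_{k_i}$ for a basis. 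Your approach is more abstract and self-contained (it does not need the preliminary "intersection of codimension-one subgroups" fact, which it instead proves as claim (b)), while the paper's Smith-normal-form argument is more concrete and constructive. One minor point: since $T^r$ is connected, the quotient $T^r/H$ is connected, so it is actually a torus $T^s$ and the finite factor $F$ in your classification step is automatically trivial; this does not affect the rank count, but the invocation of the general structure theorem is slightly broader than needed.
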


\begin{proof}
Let
$H=H_{l_1}\cap\ldots\cap H_{l_p}$, where $l_1,\ldots,l_p\in \mathbb{Z}^r\setminus\{0\}$.
Let $R \in M_{p\times r}(\mathbb{Z})$ have rows $l_1,\dots,l_p$. Then
$
H=\{e^{i\phi}\in T^r : R\phi \in (2\pi\mathbb{Z})^{p}\}.
$
By the Smith normal form, see, for example, \cite[Theorem 6.6]{Matthews}, there exist unimodular $P \in \mathrm{GL}_p(\mathbb{Z})$ and
$Q \in \mathrm{GL}_r(\mathbb{Z})$ such that
\[
D=PRQ=(D_{ij})_{1\le i\le p, 1\le j\le r},\qquad
D_{ij}=\begin{cases}
d_i & \text{if } i=j\le \rho,\\
0 & \text{otherwise},
\end{cases}
\quad d_i\ge 1.
\]
Put $\theta = Q^{-1}\phi$. Since $P$ is unimodular, the condition $R\phi \in (2\pi\mathbb{Z})^{p}$
is equivalent to $D\theta \in (2\pi\mathbb{Z})^{p}$, i.e. $d_j\theta_j\in 2\pi\mathbb{Z}$ for
$j=1,\dots,\rho$. Hence, using the $\theta$–coordinates,
\[
H=\{e^{i\theta_1}\in S^1:e^{i d_1\theta_1}=1\}\times \ldots \times \{e^{i\theta_\rho}\in S^1:e^{i d_\rho\theta_\rho}=1\} \times T^{r-\rho},
\]
so $\codim H=\rho$. Since $\codim H=s$, we conclude $\rho=s$.
For $j=1,\dots,s$ set $k_j := d_j (Q^{-1})^{T} e_j \in \mathbb{Z}^r$, where $e_j$ is the $j$-th standard basis vector of $\mathbb{R}^r$,
and note that $d_j\theta_j\in 2\pi\mathbb{Z}$ iff $\langle k_j,\phi\rangle\in 2\pi\mathbb{Z}$.
Hence $H=H_{k_1}\cap\ldots\cap H_{k_s}$.
\end{proof}

We now turn to intersections of the form $(H\times T^{l})\cap H_{(m,n)}$, which will play a central role in the bifurcation index calculations. We begin by recording their dimension.

\begin{Lemma}\label{lem:dim}
Let $H \in \sub(T^r)$ and let $(m,n)\in\mathbb Z^r\times\mathbb Z^l$. If $n\neq 0$, then
\[
\dim\big((H\times T^{l})\cap H_{(m,n)}\big)=l+\dim H-1.
\]
\end{Lemma}
\begin{proof}
By Lemma~\ref{lem:codimH} there exist linearly independent $k_1,\dots,k_s\in\mathbb{Z}^r$ with
$H=\bigcap_{j=1}^s H_{k_j}$, where $s:=\codim H$. Define
\[
F\colon \mathbb{R}^r\times\mathbb{R}^l\to \mathbb{C}^{s+1},\quad
F(\phi,\psi):=\bigl(e^{i\langle k_1,\phi\rangle}-1,\dots,e^{i\langle k_s,\phi\rangle}-1,\ e^{i(\langle m,\phi\rangle+\langle n,\psi\rangle)}-1\bigr).
\]
Then $F^{-1}(0)=(H\times T^{l})\cap H_{(m,n)}$. At any $(\phi,\psi)\in F^{-1}(0)$, the real Jacobian $F'(\phi,\psi)$ has the same rank as the matrix with rows
$
(k_1,0), \dots, (k_s,0), (m,n).
$
This rank is $s+1$: the first $s$ rows are linearly independent, while the last row has a nonzero $\psi$--part because $n\neq 0$, and hence is independent of them. The implicit function theorem yields
\[
\dim F^{-1}(0)=(r+l)-(s+1)=l+(r-s)-1=l+\dim H-1.
\]
\end{proof}

Using the above lemma with \eqref{eq:UTmultiplication}, we immediately obtain:
\begin{Corollary}\label{cor:nontr-prod}
If $n\neq 0$, then
$\chi_{T^{r+l}} ((T^r/H\times T^{ l})^+ )\star \chi_{T^{r+l}} ((T^{r}/H_{(m,n)})^+ )\neq \Theta$.
\end{Corollary}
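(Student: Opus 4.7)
The plan is to check that the dimension identity in the first case of \eqref{eq:UTmultiplication} holds, so that the product equals $\chi_{T^{r+l}}\bigl((T^{r+l}/((H\times T^{l})\cap H_{(m,n)}))^{+}\bigr)\ne\Theta$, rather than the zero element $\Theta$.

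First I would compute the three dimensions involved. Since $n\ne 0$ we have $(m,n)\ne 0$, so $H_{(m,n)}$ is a codimension-one closed subgroup of $T^{r+l}$; in particular $\dim H_{(m,n)}=r+l-1$. Also $\dim(H\times T^{l})=\dim H+l$, while Lemma~\ref{lem:dim} (whose hypothesis $n\ne 0$ is exactly the one we have) gives $\dim\bigl((H\times T^{l})\cap H_{(m,n)}\bigr)=l+\dim H-1$.

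Next I would verify the balance condition from \eqref{eq:UTmultiplication}:
\[
\dim(H\times T^{l})+\dim H_{(m,n)}=(\dim H+l)+(r+l-1)=(r+l)+(l+\dim H-1),
\]
and the right-hand side equals $\dim T^{r+l}+\dim\bigl((H\times T^{l})\cap H_{(m,n)}\bigr)$. Thus we are in the first (nonzero) case of the multiplication formula, and the star product equals $\chi_{T^{r+l}}\bigl((T^{r+l}/((H\times T^{l})\cap H_{(m,n)}))^{+}\bigr)$, which is a nonzero generator of $U(T^{r+l})$, hence not $\Theta$.

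There is no real obstacle here: the statement is a direct bookkeeping corollary of Lemma~\ref{lem:dim} combined with \eqref{eq:UTmultiplication}. The only point worth double-checking is that $(m,n)\ne 0$ (which is automatic from $n\ne 0$), so that $H_{(m,n)}$ is a proper codimension-one subgroup and not all of $T^{r+l}$; this ensures that both factors in the product are genuine generators and the dimension arithmetic above goes through.
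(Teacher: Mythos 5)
Your proof is correct and is precisely the argument the paper has in mind: the paper states the corollary as an immediate consequence of Lemma~\ref{lem:dim} together with the multiplication formula \eqref{eq:UTmultiplication}, and you simply carry out the dimension bookkeeping that makes this explicit.
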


We are now going to show that a sum of elements as in the above corollary is never trivial, provided $n\neq 0$. To that end we will need the following:

\begin{Lemma}\label{lem:intersections}
Let $r,l\ge 1$, let $H,H'\in\sub(T^r)$ and $m,m'\in\mathbb{Z}^r$, $n,n'\in\mathbb{Z}^l$.
\begin{enumerate}
\item If exactly one of $n,n'$ is zero, then
$
(H\times T^l)\cap H_{(m,n)}\ne (H'\times T^l)\cap H_{(m',n')}.
$
\item If $n\ne 0$ and $n'\ne 0$ and
$
(H\times T^l)\cap H_{(m,n)}=(H'\times T^l)\cap H_{(m',n')},
$
then $H=H'$.
\end{enumerate}
\end{Lemma}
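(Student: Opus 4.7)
The plan is to reduce both statements to two structural features of the intersection $(H\times T^l)\cap H_{(m,n)}$: how much of the $T^l$–slice it contains, and what its image is under the projection onto the first factor $T^r$. Both of these depend sharply on whether the $T^l$–component $n$ of the exponent vanishes, which is precisely the dichotomy governing the two parts of the lemma.

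For part (1), assume $n=0$ and $n'\ne 0$. Then $H_{(m,0)}=H_m\times T^l$, so
\[
(H\times T^l)\cap H_{(m,0)}=(H\cap H_m)\times T^l,
\]
which in particular contains the entire slice $\{e\}\times T^l$. On the other hand, if the pair $(e,e^{i\psi})$ lies in $H_{(m',n')}$ for every $\psi\in\mathbb{R}^l$, then $e^{i\langle n',\psi\rangle}=1$ for all $\psi$, which forces $n'=0$ and contradicts the hypothesis. Hence $\{e\}\times T^l$ is not contained in $(H'\times T^l)\cap H_{(m',n')}$, so the two subgroups differ.

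For part (2), the main step is to show that, when $n\ne 0$, the canonical projection $\pi\colon T^{r+l}\to T^r$ sends $(H\times T^l)\cap H_{(m,n)}$ onto $H$. The inclusion $\pi\bigl((H\times T^l)\cap H_{(m,n)}\bigr)\subseteq H$ is immediate from the definition. For the opposite inclusion, given $e^{i\phi}\in H$ one must produce $\psi\in\mathbb{R}^l$ satisfying $\langle m,\phi\rangle+\langle n,\psi\rangle\in 2\pi\mathbb{Z}$; since $n\ne 0$, the homomorphism $\psi\mapsto e^{i\langle n,\psi\rangle}$ is a surjective map onto $S^1$, so such a $\psi$ exists. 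The same reasoning applied to the primed data yields $\pi\bigl((H'\times T^l)\cap H_{(m',n')}\bigr)=H'$. Since equal subgroups of $T^{r+l}$ have equal projections under $\pi$, the assumed identity of the two intersections then gives $H=H'$.

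I do not anticipate a serious obstacle: the whole argument is structural, and the only point worth a short verification is the surjectivity of $\psi\mapsto e^{i\langle n,\psi\rangle}$ onto $S^1$ when $n\ne 0$, which is immediate as its image is a positive–dimensional closed connected subgroup of $S^1$.
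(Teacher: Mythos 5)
Your proof is correct and follows essentially the same route as the paper: part (1) via the observation that $\{e\}\times T^l$ lies in the intersection precisely when the $T^l$–component of the exponent vanishes, and part (2) via the fact that projection onto $T^r$ recovers $H$ once $n\ne0$, using surjectivity of $\psi\mapsto e^{i\langle n,\psi\rangle}$. No meaningful differences.
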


\begin{proof}
(1) Assume without loss of generality that $n=0$ and $n'\ne 0$. Then
\[
(H\times T^l)\cap H_{(m,0)}=(H\cap H_m)\times T^l,
\]
so in particular $\{e\}\times T^l\subset (H\times T^l)\cap H_{(m,0)}$. On the other hand,
\[
(H'\times T^l)\cap H_{(m',n')}\cap(\{e\}\times T^l)=\{e\}\times H_{(0,n')},
\]
which is a proper subset of $\{e\}\times T^l$ because $n'\ne 0$. Hence the intersections cannot be equal.

(2) Assume $n\ne 0$ and $n'\ne 0$ and suppose
$$
(H\times T^l)\cap H_{(m,n)}=(H'\times T^l)\cap H_{(m',n')}.
$$
Let $\pi_1:T^r\times T^l\to T^r$ be the projection onto the first factor. For $n\ne 0$ one has
$
\pi_1\big((H\times T^l)\cap H_{(m,n)}\big)=H.
$
Indeed,
$$(H\times T^l)\cap H_{(m,n)}\subseteq H\times T^l\Rightarrow
\pi_1\big((H\times T^l)\cap H_{(m,n)}\big)\subseteq \pi_1(H\times T^l) =H.$$
Conversely, for every $e^{i\phi}\in H$ the surjectivity of $T^l\to S^1$, $e^{i\psi}\mapsto e^{i\langle n,\psi\rangle}$ (since $n\ne 0$), provides $e^{i\psi}\in T^l$ with $e^{i\langle n,\psi\rangle}=e^{-i\langle m,\phi\rangle}$, hence $(e^{i\phi},e^{i\psi})\in(H\times T^l)\cap H_{(m,n)}$, and applying $\pi_1$ gives $e^{i\phi}\in\pi_1((H\times T^l)\cap H_{(m,n)})$. Since $e^{i\phi}\in H$ was arbitrary, this shows $H\subseteq\pi_1((H\times T^l)\cap H_{(m,n)})$.
Similarly,
$
\pi_1\big((H'\times T^l)\cap H_{(m',n')}\big)=H'.
$
Taking $\pi_1$ of the assumed equality gives $H=H'$, as claimed.
\end{proof}

The next lemma will be used to establish nontriviality of the bifurcation index in the next section.

\begin{Lemma}\label{lem:inters}
Consider the elements of $U(T^{r+l})$ defined by
\[
A:=\sum_{H\in\sub(T^{r})\setminus\{T^{r}\}} n_{H}\,\chi_{T^{r+l}}\bigl(T^{r+l}/(H\times T^{l})^{+}\bigr),\quad
B:=\sum_{(m,n)\in\mathbb{Z}^{r+l}\setminus\{0\}} q_{(m,n)}\,\chi_{T^{r+l}}\bigl(T^{r+l}/H_{(m,n)}^{+}\bigr).
\]
Assume all nonzero $q_{(m,n)}$ have the same sign and that $q_{(m_0,n_0)}\ne 0$ for some $(m_{0},n_{0})$ with $n_{0}\ne 0$. If $A\neq\Theta$, then $A\star B\ne \Theta$.
\end{Lemma}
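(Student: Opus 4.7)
The plan is to expand $A\star B$ bilinearly via the multiplication rule \eqref{eq:UTmultiplication}, to single out one specific generator $\chi_{T^{r+l}}(T^{r+l}/K_0^+)$ appearing in the expansion, and to show that the terms contributing to it cannot cancel. Since $A\neq\Theta$, I may fix $H_0\in\sub(T^r)\setminus\{T^r\}$ with $n_{H_0}\neq 0$; by hypothesis there exists $(m_0,n_0)\in\mathbb{Z}^{r+l}\setminus\{0\}$ with $n_0\neq 0$ and $q_{(m_0,n_0)}\neq 0$. The candidate generator will be $K_0:=(H_0\times T^l)\cap H_{(m_0,n_0)}$, and I will compute the coefficient of $\chi_{T^{r+l}}(T^{r+l}/K_0^+)$ in $A\star B$ directly.

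For each pair $(H,(m,n))$ indexing the double sum I will split into two cases. If $n=0$, then $H_{(m,0)}=H_m\times T^l$, so by \eqref{eq:UTmultiplication} the corresponding product is either $\Theta$ or of the form $\chi_{T^{r+l}}\bigl(T^{r+l}/((H\cap H_m)\times T^l)^+\bigr)$; part (1) of Lemma~\ref{lem:intersections}, applied to $(H,0)$ and $(H_0,n_0)$ with $n_0\neq 0$, rules out that $(H\cap H_m)\times T^l$ coincides with $K_0$, so no such summand contributes. If $n\neq 0$, Corollary~\ref{cor:nontr-prod} guarantees the product is nontrivial and equals $\chi_{T^{r+l}}\bigl(T^{r+l}/((H\times T^l)\cap H_{(m,n)})^+\bigr)$; demanding this equal $\chi_{T^{r+l}}(T^{r+l}/K_0^+)$ then forces $H=H_0$ by part (2) of Lemma~\ref{lem:intersections}.

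Putting these observations together, the coefficient of $\chi_{T^{r+l}}(T^{r+l}/K_0^+)$ in $A\star B$ reduces to
\[
n_{H_0}\sum_{\substack{(m,n)\in\mathbb{Z}^{r+l}\setminus\{0\},\ n\neq 0\\ (H_0\times T^l)\cap H_{(m,n)}=K_0}} q_{(m,n)}.
\]
The index set contains $(m_0,n_0)$, for which $q_{(m_0,n_0)}\neq 0$, and by hypothesis all nonzero $q_{(m,n)}$ share a common sign. Hence the displayed sum is a sum of same-signed terms containing at least one strictly nonzero term, so it is nonzero; combined with $n_{H_0}\neq 0$, this yields $A\star B\neq\Theta$.

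The main obstacle is to pick $K_0$ in such a way that its coefficient admits a clean closed form, so that one actually has control over which pairs $(H,(m,n))$ contribute. Both parts of Lemma~\ref{lem:intersections} are essential here: part (1) excludes the ``mixed'' contributions coming from $n=0$ terms of $B$, while part (2) collapses the remaining $n\neq 0$ sum to a single value of $H$. Only after this reduction does the sign hypothesis on the coefficients $q_{(m,n)}$ become enough to preclude cancellation in the residual sum.
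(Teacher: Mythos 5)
Your proposal is correct and follows essentially the same route as the paper's proof: both fix a single generator $K_0=(H_0\times T^l)\cap H_{(m_0,n_0)}$ and show its coefficient in $A\star B$ collapses, via Lemma~\ref{lem:intersections} (and Corollary~\ref{cor:nontr-prod}), to $n_{H_0}$ times a same-signed nonempty sum of $q_{(m,n)}$'s. Your write-up is slightly more explicit in separating the $n=0$ and $n\neq 0$ contributions, but there is no substantive difference.
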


\begin{proof}
Fix $(m_0,n_0)$ with $n_0\ne 0$ and $q_{(m_0,n_0)}\ne 0$. Choose $H$ with $n_H\ne 0$ and set $$K_H:=(H\times T^l)\cap H_{(m_0,n_0)}\subset T^{r+l}.$$ The coefficient of $\chi_{T^{r+l}}(T^{r+l}/K_H^+)$ in $A\star B$ equals the sum of $n_{H'}q_{(m,n)}$ over all $H'\in\sub(T^r)\setminus\{T^r\}$ and $(m,n)\in\mathbb{Z}^{r+l}\setminus\{0\}$ with $(H'\times T^l)\cap H_{(m,n)}=K_H$. By Lemma~\ref{lem:intersections}, this forces  $H'=H$, so the coefficient is $n_H\sum q_{(m,n)}$ taken over all $(m,n)$ with $(H\times T^l)\cap H_{(m,n)}=K_H$. This sum contains $q_{(m_0,n_0)}$ and all nonzero $q_{(m,n)}$ have the same sign, hence the sum is nonzero; since $n_H\ne 0$, the coefficient is nonzero. Therefore $A\star B\ne \Theta$.
\end{proof}

\subsection{The degree}\label{subsec:degree}

Let $\bV$ be a finite-dimensional orthogonal $T^r$-representation, let $\varphi\in C^{1}(\bV,\mathbb{R})$ be $T^r$-invariant, and let $\Omega\subset\bV$ be open, bounded, $T^r$-invariant with $\partial\Omega\cap(\nabla\varphi)^{-1}(0)=\emptyset$. Gęba has defined in \cite{Geba}, for any compact Lie group $G$, a degree $\deg^{\nabla}_{G}(\nabla\varphi,\Omega)$ for $G$-equivariant gradient maps. Rybicki has extended this to the infinite-dimensional setting (see \cite{Ryb2005milano}): if $\bH$ is a separable Hilbert space being an orthogonal $G$-representation and $\Phi\in C^{1}(\bH,\mathbb{R})$ is $G$-invariant with $\nabla\Phi(u)=u-\nabla\eta(u)$, where $\nabla\eta\colon\bH\to\bH$ is $G$-equivariant and completely continuous, then for any open, bounded, $G$-invariant $\Omega\subset\bH$ with $\partial\Omega\cap(\nabla\Phi)^{-1}(0)=\emptyset$ one obtains a degree $\deg^{\nabla}_{G}(\nabla\Phi,\Omega)$. In this paper we specialize to $G=T^r$ and keep the notation $\deg^{\nabla}_{T^r}$ in both the finite- and infinite-dimensional cases; context will indicate which one is meant.

We record the basic computational tool for $\deg^{\nabla}_{T^r}(-\mathrm{Id},B(\bV))$.
Write $B(\bV)$ for the unit ball of $\bV$ and set $S^{\bV}:=B(\bV)/\partial B(\bV)$.
By \cite{Geba},
$\deg^{\nabla}_{T^r} (-\mathrm{Id},B(\bV) )=\chi_{T^r}(S^{\bV}).$
Combining this identification with \cite[Thm.~3.2]{GarRyb} yields the following theorem.

\begin{Theorem}\label{thm:UTnmult}
Let $\bV=\bR[k_0,0]\oplus\bigoplus_{i=1}^{s}\bR[k_i,m_i]$ be a $T^r$-representation. Then
\[
\deg^{\nabla}_{T^r}\bigl(-\mathrm{Id},B(\bV)\bigr)=(-1)^{k_0} \left(\bI-\sum_{i=1}^{s}k_i \chi_{T^r}(T^r/H_{m_i}^{+})\right)+\cO(\bV),
\]
where $\cO(\bV)$ is a $\mathbb{Z}$-linear combination of terms $\chi_{T^r}(T^r/H^{+})$ with $\dim H\le \dim T^r-2$.
\end{Theorem}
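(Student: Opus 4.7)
The plan is to reduce the degree calculation to an Euler-ring identity inside $U(T^r)$. First I would invoke the identification $\deg^{\nabla}_{T^r}(-\mathrm{Id},B(\bV))=\chi_{T^r}(S^{\bV})$ recalled just above the statement, so the task becomes computing $\chi_{T^r}(S^{\bV})$ in the Euler ring. Using $\bV=\bR[k_0,0]\oplus\bigoplus_{i=1}^{s}\bR[k_i,m_i]$, the homeomorphism $S^{\bV_1\oplus \bV_2}\cong S^{\bV_1}\wedge S^{\bV_2}$, and the fact that $\chi_{T^r}$ carries the smash product to the $\star$-product in $U(T^r)$, I would write
\[
\chi_{T^r}(S^{\bV})=\chi_{T^r}(S^{\bR[k_0,0]})\star\prod_{i=1}^{s}\chi_{T^r}(S^{\bR[1,m_i]})^{\star k_i}.
\]

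The two basic factors can be read off from their pointed $T^r$-CW structures. Since $\bR[k_0,0]$ carries the trivial action, $S^{\bR[k_0,0]}$ is a $k_0$-sphere with trivial action and a single nontrivial cell in dimension $k_0$, so $\chi_{T^r}(S^{\bR[k_0,0]})=(-1)^{k_0}\bI$. For $m_i\neq 0$, the space $S^{\bR[1,m_i]}$ is a $2$-sphere with two fixed points (the basepoint at infinity and the origin) and intermediate orbits of type $T^r/H_{m_i}$; the natural pointed $T^r$-CW structure consists of the basepoint $0$-cell, one fixed $0$-cell at the origin of type $T^r/T^r$, and one $1$-cell of orbit type $T^r/H_{m_i}$ attaching its two boundary circles to the fixed points. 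This yields $\chi_{T^r}(S^{\bR[1,m_i]})=\bI-\chi_{T^r}((T^r/H_{m_i})^+)$.

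Writing $X_i:=\chi_{T^r}((T^r/H_{m_i})^+)$ and noting $\dim H_{m_i}=r-1$, the dimension condition in \eqref{eq:UTmultiplication} forces $X_i\star X_i=\Theta$ because $2(r-1)<2r-1$. Hence $(\bI-X_i)^{\star k_i}$ collapses to $\bI-k_iX_i$, and expanding $\prod_i(\bI-k_iX_i)$ produces the constant term $\bI$, the linear term $-\sum_i k_iX_i$, and a sum of $\star$-products of the $X_i$'s with distinct indices of length $\ge 2$. Each such higher product is either $\Theta$ or, by iterating \eqref{eq:UTmultiplication}, equals $\chi_{T^r}((T^r/K)^+)$ with $\dim K\le r-2$; by Corollary~\ref{cor:bigcodim} this property is preserved after multiplying by $(-1)^{k_0}\bI$, so the whole tail lies in $\cO(\bV)$, and the stated formula follows.

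The main obstacle is the CW-level identification $\chi_{T^r}(S^{\bR[1,m_i]})=\bI-\chi_{T^r}((T^r/H_{m_i})^+)$, which requires careful bookkeeping of orbit types and the basepoint contribution; everything else is formal manipulation in $U(T^r)$ using \eqref{eq:UTmultiplication} and Corollary~\ref{cor:bigcodim}. As a shortcut, this identity is essentially the content of \cite[Thm.~3.2]{GarRyb}, which can be quoted to bypass the direct CW computation.
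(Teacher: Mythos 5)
Your proposal is correct and follows the same route as the paper: the paper simply cites Gęba for the identification $\deg^{\nabla}_{T^r}(-\mathrm{Id},B(\bV))=\chi_{T^r}(S^{\bV})$ and then quotes \cite[Thm.~3.2]{GarRyb} for the Euler-ring formula, whereas you additionally spell out the CW-level computation of $\chi_{T^r}(S^{\bR[1,m_i]})$ and the expansion in $U(T^r)$ that underlie the cited theorem. Since you explicitly note at the end that the key identity is exactly the content of \cite[Thm.~3.2]{GarRyb}, the two proofs coincide in substance.
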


Since $\bR[k_i,m_i]$ are even dimensional, $(-1)^{k_0}=(-1)^{\dim \bV}$, hence, for $\bV=\bR[k_0,0]\oplus\bigoplus_{i=1}^{s}\bR[k_i,m_i]$,
\begin{equation}\label{eq:deg-id}
\deg^{\nabla}_{T^r}\bigl(-\mathrm{Id},B(\bV)\bigr)=(-1)^{\dim\bV} \left(\bI-\sum_{i=1}^{s}k_i \chi_{T^r}(T^r/H_{m_i}^{+})\right)+\cO(\bV).
\end{equation}

\begin{Remark}\label{rem:lift-degree-to-big-torus}
Let $\bV$ be a $T^{r}$-representation, $\Omega\subset \bV$ a $T^{r}$-invariant set, and $\varphi\in C^{1}(\bV,\mathbb{R})$ a $T^{r}$-invariant function with $\partial\Omega\cap(\nabla\varphi)^{-1}(0)=\emptyset$. Viewing $\bV$ as a $T^{r+l}=T^{r}\times T^{l}$-representation by letting $T^{l}$ act trivially, by the definition of the degree, if
\[
\deg^{\nabla}_{T^{r}}(\nabla\varphi,\Omega)
= a_{0} \chi_{T^{r}}(T^{r}/T^{r})+\sum_{H\in\sub(T^{r})} a_{H} \chi_{T^{r}}(T^{r}/H),
\]
 then
\[
\deg^{\nabla}_{T^{r+l}}(\nabla\varphi,\Omega)
= a_{0} \chi_{T^{r+l}}(T^{r+l}/T^{r+l})+\sum_{H\in\sub(T^{r})} a_{H} \chi_{T^{r+l}}\bigl(T^{r+l}/(H\times T^{l})\bigr).
\]
\end{Remark}

\section{Main result}\label{sec:main}

In this section we state and prove the main results. We begin by introducing the system and the standing assumptions, then describe the variational structure and formulate the bifurcation problem under consideration. Next we establish global bifurcation and symmetry--breaking theorems. We conclude with the case of a symmetric space, where we prove a stronger structural statement on the unboundedness of the bifurcating sets of solutions.

\subsection{Setting and assumptions}\label{subsec:system}

We study bifurcations of weak solutions to
\begin{equation}\label{eq:GH}
-\Delta_{M}u=\nabla_{u}F(u,\lambda) \quad\text{in } M\setminus\partial M,
\end{equation}
where $\lambda\in\mathbb{R}$, $M$ is a compact smooth Riemannian manifold and $-\Delta_{M}$ is the Laplace--Beltrami operator. The unknown is a map $u:M\to\bR^p$; thus $M$ is the domain of the equation, while $\bR^p$ is the codomain of the solution vector. When $\partial M\neq\emptyset$ we impose the Neumann boundary condition
\begin{equation}\label{eq:GH-Neumann}
\partial_{\nu}u=0 \quad\text{on } \partial M,
\end{equation}
where $\nu$ is the outward unit normal vector. Thus \eqref{eq:GH}--\eqref{eq:GH-Neumann} covers both cases, the boundary condition is void if $\partial M=\emptyset$.

We work under the following assumptions.

\begin{enumerate}
\item[(B1)] $M$ is a compact smooth Riemannian manifold with a smooth isometric action of an infinite compact Lie group $G$, so that $-\Delta_{M}$ is $G$--equivariant.

\item[(B2)] $\bR^{p}$ is an orthogonal representation of a torus $T^{r}$.

\item[(B3)] $F\in C^{2}(\bR^{p}\times\bR,\bR)$ is $T^{r}$-invariant in the first variable, i.e. $F(\tau u,\lambda)=F(u,\lambda)$ for all $\tau\in T^{r}$, $u\in\bR^{p}$, $\lambda\in\bR$. There exist $C>0$ and
\[
s\in
\begin{cases}
[1,(\dim M+2)/(\dim M-2))&\text{if }\dim M\ge 3,\\
[1,+\infty)&\text{if }\dim M\le 2,
\end{cases}
\]
such that
$
|\nabla^{2}_{u}F(u,\lambda)|\le C (1+|u|^{s-1})$ for all $(u,\lambda)\in\bR^{p}\times\bR.
$

\item[(B4)] There exists $u_{0}\in\bR^{p}$ with $\nabla_{u}F(u_{0},\lambda)=0$ for all $\lambda\in\bR$. After the change of variables $v=u-u_{0}$ we assume $u_{0}=0$.

\item[(B5)] There exists a symmetric matrix $A$ such that
$\nabla^{2}_{u}F(0,\lambda)=\lambda A$ for all $\lambda\in\bR.$

\item[(B6)] There exists $\delta>0$ such that $B_{\delta}(\bR^{p})\cap(\nabla_{u}F(\cdot,\lambda))^{-1}(0)=\{0\}$ and
\[
\deg^{\nabla}_{T^{r}}\bigl(\nabla_{u}F(\cdot,\lambda),B_{\delta}(\bR^{p})\bigr)\ne \Theta\in U(T^{r})
\]
for all $\lambda\in\bR\setminus\{0\}$. Here $B_{\delta}(X)$ denotes the open ball of radius $\delta$ in a normed space $X$.
\end{enumerate}

\begin{Remark}\label{rem:B6}
Assumption \textnormal{(B6)} may appear technical, but it is easy to verify in standard situations. Following the argument of \cite[Remark~4.1]{GolRybSte2025}, we note:
\begin{enumerate}[(i)]
\item If $0$ is a nondegenerate critical point of $F(\cdot,\lambda)$, then for $\delta>0$ small the degree
$\deg^{\nabla}_{T^{r}} (\nabla_{u}F(\cdot,\lambda),B_{\delta}(\mathbb{R}^{p}) )$
is nontrivial, hence \textnormal{(B6)} holds.
\item More generally, if the Brouwer degree $\deg_{B} (\nabla_{u}F(\cdot,\lambda),B_{\delta}(\mathbb{R}^{p}),0 )$ is nonzero (for example, when $0$ is a strict local minimum or maximum), then the $T^{r}$–equivariant degree is nontrivial, so \textnormal{(B6)} holds.
\end{enumerate}
As noted in \cite[Remark~4.3]{GolRybSte2025} (treated there for $r=1$; the same reasoning extends to any $r$), condition \textnormal{(B6)} is not limited to these cases.

Moreover, when the original symmetry is $S^{1}$, we may enlarge the symmetry group by embedding $S^{1}\subset T^{r}$ and letting the complementary torus act trivially. As in Remark~\ref{rem:lift-degree-to-big-torus}, nontriviality of the $S^{1}$–equivariant degree then lifts to nontriviality of the $T^{r}$–equivariant degree. In particular, \textnormal{(B6)} covers the $S^{1}$ condition \textnormal{(a3.2)} from \cite{GolRybSte2025}; the converse generally fails, so \textnormal{(B6)} is strictly more general.
\end{Remark}

\begin{Remark}
The potential $F$ may be symmetric under any compact Lie group $\Gamma$, not necessarily a torus. One could try to replace \textnormal{(B6)} by nontriviality of the $\Gamma$–equivariant degree. However, nontriviality does not, in general, pass to a maximal torus under restriction and we would then have to work in the Euler ring of a product with $\Gamma$, whose multiplicative structure is generally not explicit.
For this reason we impose \textnormal{(B6)} at the torus level requiring the $T^{r}$–equivariant degree to be nontrivial. This guarantees that later computations can be carried out in the Euler ring of a torus, where the structure is explicit.
\end{Remark}

We study weak solutions of \eqref{eq:GH}--\eqref{eq:GH-Neumann} as critical points of an associated functional on a Sobolev space.
Let $M$ be as in (B1). Denote by $H^{1}(M)$ the Sobolev space with inner product
\[
\langle v,w\rangle_{H^{1}(M)}:=\int_{M} (\nabla v(x),\nabla w(x))\,dM(x)+\int_{M} v(x)\cdot w(x)\,dM(x).
\]
Here $(\cdot,\cdot)$ is the Riemannian inner product and $\cdot$ is the Euclidean product on $\bR^p$. Set
$\bH:=\bigoplus_{i=1}^{p}H^{1}(M)$, with
$\langle u,v\rangle_{\bH}:=\sum_{i=1}^{p}\langle u_{i},v_{i}\rangle_{H^{1}(M)}$.

Weak solutions of \eqref{eq:GH}--\eqref{eq:GH-Neumann} are in one-to-one correspondence with critical points of
\[
\Phi(u,\lambda):=\tfrac12\int_{M}|\nabla u(x)|^{2}\,dM(x)-\int_{M}F(u(x),\lambda)\,dM(x),
\qquad (u,\lambda)\in\bH\times\bR.
\]
By the standard argument (see \cite{Rabinowitz1986}), $\nabla_u\Phi$ is a completely continuous perturbation of the identity. A routine computation gives
$\nabla_u\Phi(u,\lambda)=u-L_{\lambda A}u+\nabla_u\eta(u,\lambda),$
where $L_{\lambda A}\colon\bH\to\bH$ is defined by
\[
\langle L_{\lambda A}u,v\rangle_{\bH}=\int_{M}\bigl(u(x)+\lambda A u(x)\bigr)\cdot v(x)\,dM(x),\qquad u,v\in\bH,
\]
and $\nabla_u\eta(\cdot,\lambda)\colon\bH\to\bH$ is compact with $\nabla_u\eta(0,\lambda)=0$ and $\nabla_u^2\eta(0,\lambda)=0$ for all $\lambda\in\bR$. The operator $L_{\lambda A}$ is bounded, self-adjoint, and completely continuous on $\bH$.

By (B1)--(B3), $\bH$ is an orthogonal $T^{r}\times G$–representation with action $((\tau,g)\cdot u)(x):=\tau u(g^{-1}x)$ for $\tau\in T^{r}$, $g\in G$, $u\in\bH$, $x\in M$. With this action the functional $\Phi$ is invariant and the gradient is equivariant:
\[
\nabla_{u}\Phi((\tau,g)\cdot u,\lambda)=(\tau,g)\cdot\nabla_{u}\Phi(u,\lambda)\quad\text{for }\tau\in T^{r},\ g\in G,\ \lambda\in\bR.
\]

Let $\sigma(-\Delta_{M}):=\{0=\beta_{1}<\beta_{2}<\cdots\}$ be the spectrum of the Laplace--Beltrami operator on $M$ (with Neumann boundary condition if $\partial M\ne\emptyset$), and write $\bV_{-\Delta_M}(\beta_k)$ for the corresponding eigenspace. Let $\sigma(A)$ be the spectrum of $A$. Arguing as in \cite[Lemma~3.2]{GolKlu},
\begin{equation}\label{eq:spectrum}
\sigma\bigl(\mathrm{Id}-L_{\lambda A}\bigr)
=\left\{\frac{\beta_{k}-\lambda\alpha_{j}}{1+\beta_{k}}:\ \alpha_{j}\in\sigma(A),\ \beta_{k}\in\sigma(-\Delta_{M})\right\}.
\end{equation}
Let $\cV_0(\lambda) := \ker(\mathrm{Id}-L_{\lambda A})$,
$\bH_{0}:=\bigoplus_{i=1}^{p}\bV_{-\Delta_{M}}(0)$ and set $\cV(\lambda):=\cV_{0}(\lambda)\cap\bH_0^{\perp}$. By \eqref{eq:spectrum} and Lemma~\ref{lem:external-tensor},
\begin{equation}\label{eq:cvlambda}
\cV(\lambda)\approx_{T^r\times G}
\bigoplus_{\alpha_{j}\in\sigma(A)}\ \bigoplus_{\substack{\beta_{k}\in\sigma(-\Delta_{M})\setminus\{0\}\\ \beta_{k}=\lambda \alpha_{j}}}
\bigl(\bV_{A}(\alpha_{j})\otimes \bV_{-\Delta_{M}}(\beta_{k})\bigr),
\end{equation}
where $\bV_{A}(\alpha_{j})$ is the eigenspace of $A$ with eigenvalue $\alpha_{j}$.

Set
\begin{equation}\label{eq:Lambda}
\Lambda:=\Bigl\{\frac{\beta_{k}}{\alpha_{j}}:\ \alpha_{j}\in\sigma(A)\setminus\{0\},\ \beta_{k}\in\sigma(-\Delta_{M}) \Bigr\}.
\end{equation}
Then, for $\lambda \neq 0$, $\cV(\lambda)\neq\{0\}$ if and only if $\lambda\in\Lambda$. The set $\sigma(A)\setminus\{0\}$ is finite. Moreover, since $M$ is compact, the spectrum of $-\Delta_M$ is discrete, each eigenspace is finite-dimensional and $\beta_k\to+\infty$. Hence $\Lambda$ has no accumulation points. In particular, $\Lambda$ is discrete, i.e. every $\lambda_0\in\Lambda$ is isolated.

Let $\cW_{0}(\lambda)$ be the direct sum of the eigenspaces of $\operatorname{Id}-L_{\lambda A}$ corresponding to negative eigenvalues and set $\cW(\lambda):=\cW_{0}(\lambda)\cap\bH_0^{\bot}$. By \eqref{eq:spectrum} and Lemma~\ref{lem:external-tensor},
\begin{equation*}\label{eq:cWlambda}
\cW(\lambda)\approx_{T^r\times G}
\bigoplus_{\alpha_{j}\in\sigma(A)}\ \bigoplus_{\substack{\beta_{k}\in\sigma(-\Delta_{M})\setminus\{0\}\ \beta_{k}<\lambda \alpha_{j}}}
\bigl(\bV_{A}(\alpha_{j})\otimes \bV_{-\Delta_{M}}(\beta_{k})\bigr).
\end{equation*}
For fixed $\lambda$, only finitely many $\beta_k$ can satisfy $\beta_k<\lambda\alpha_j$ for some $\alpha_j\in\sigma(A)$. Hence, by the finite-dimensionality of the corresponding eigenspaces, $\cW(\lambda)$ is finite-dimensional.
Moreover, for $\lambda_0\in\bR$,
\begin{equation}\label{eq:cW-piecewise}
\cW(\lambda_{0})=
\begin{cases}
\displaystyle \bigoplus\limits_{\lambda\in \Lambda\cap(0,\lambda_{0})} \cV(\lambda), & \lambda_{0}>0,\\
\{0\}, & \lambda_{0}=0,\\
\displaystyle \bigoplus\limits_{\lambda\in \Lambda\cap(\lambda_{0},0)} \cV(\lambda), & \lambda_{0}<0.
\end{cases}
\end{equation}

\subsection{Global bifurcations}\label{subsec:global}
Consider the weak-solution equation for \eqref{eq:GH}--\eqref{eq:GH-Neumann} \begin{equation}\label{eq:bifeq}
\nabla_u \Phi(u,\lambda)=0.
\end{equation}
By (B4), the zero function is a solution for every $\lambda\in\bR$. Set
$\cT:=\{0\}\times\bR\subset\bH\times\bR$
and
\[\cN:=\{(u,\lambda)\in\bH\times\bR: \nabla_u\Phi(u,\lambda)=0,\ u\neq 0\}.
\]
We call $\cT$ the set of trivial solutions and $\cN$ the set of nontrivial solutions. Let $\cC(\lambda_0)$ denote the connected component of $\cT\cup\cN$ that contains $(0,\lambda_0)\in\cT$.

A point $(0,\lambda_{0})$ is a global bifurcation point if $\cC(\lambda_{0})\neq\{(0,\lambda_{0})\}$ and either $\cC(\lambda_{0})$ is unbounded in $\bH\times\bR$ or $\cC(\lambda_{0})\cap\cT$ contains a point distinct from $(0,\lambda_{0})$.
A local bifurcation from $(0,\lambda_{0})$ occurs if $(0,\lambda_{0})$ is an accumulation point of nontrivial solutions of \eqref{eq:bifeq}.

For a $G$–representation $\bW$ denote its fixed-point subspace by $\bW^{G}:=\{w\in\bW:\ g\cdot w=w\ \text{for all }g\in G\}$.

\begin{Lemma}\label{lem:nec}
Assume \textnormal{(B1)}--\textnormal{(B6)} and let $\lambda_{0}\in\bR\setminus\{0\}$ be such that $(0,\lambda_0)\in\mathcal T$ is a local bifurcation point of weak solutions of \eqref{eq:GH} and \eqref{eq:GH-Neumann}. If at least one of the following holds:
\begin{enumerate}
\item[(N1)] $\det A\neq 0$, i.e. $0$ is a nondegenerate critical point of $F(\cdot,\lambda)$ for all $\lambda\neq 0$,
\item[(N2)] $\bV_{-\Delta_M}(\beta_k)^{G}=\{0\}$ for every $\beta_k\neq 0$,
\end{enumerate}
then $\displaystyle \lambda_{0}=\frac{\beta_{k}}{\alpha_{j}}$ for some $\beta_{k}\in\sigma(-\Delta_{M})\setminus\{0\}$ and some $\alpha_{j}\in\sigma(A)\setminus\{0\}$.
\end{Lemma}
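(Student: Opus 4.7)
The plan is to argue by contraposition: assume $\lambda_{0}\ne 0$ and $\lambda_{0}\notin\Lambda$, and show that no nontrivial solutions of \eqref{eq:bifeq} accumulate at $(0,\lambda_{0})$. The first step is to pin down $\ker(\mathrm{Id}-L_{\lambda_{0}A})$ via \eqref{eq:spectrum} and the argument behind \eqref{eq:cvlambda}: a nonzero vector lies in this kernel precisely when some pair $(\alpha_{j},\beta_{k})\in\sigma(A)\times\sigma(-\Delta_{M})$ satisfies $\beta_{k}=\lambda_{0}\alpha_{j}$. Since $\lambda_{0}\ne 0$ and $\lambda_{0}\notin\Lambda$, the only admissible pair is $(\alpha_{j},\beta_{k})=(0,0)$, yielding
\[
\ker(\mathrm{Id}-L_{\lambda_{0}A})=\ker A\otimes \bV_{-\Delta_{M}}(0),
\]
i.e.\ constant $\bR^{p}$-valued functions with values in $\ker A$.

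Under (N1) the matrix $A$ is invertible, so this kernel is trivial. Because $L_{\lambda_{0}A}$ is compact, $\mathrm{Id}-L_{\lambda_{0}A}$ is Fredholm of index zero, hence bijective. Using $\nabla_{u}^{2}\eta(0,\lambda)=0$, the linearization of $\nabla_{u}\Phi(\cdot,\lambda_{0})$ at $u=0$ coincides with $\mathrm{Id}-L_{\lambda_{0}A}$, so the implicit function theorem applied to \eqref{eq:bifeq} near $(0,\lambda_{0})$ yields a unique local solution branch, necessarily the trivial one. Thus $(0,\lambda_{0})$ is not a bifurcation point in case (N1).

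Under (ii) the kernel just computed lies inside the $G$-fixed subspace $\bH^{G}$. The hypothesis (ii), together with connectedness of $M$, forces $\bH^{G}=\bigoplus_{i=1}^{p}\bV_{-\Delta_{M}}(0)\cong\bR^{p}$, identified with constant $\bR^{p}$-valued functions on $M$. I would first verify that $L_{\lambda A}$ is $G$-equivariant (using that $G$ acts isometrically on $M$ and trivially on $\bR^{p}$), so the orthogonal splitting $\bH=\bH^{G}\oplus(\bH^{G})^{\perp}$ is preserved by $\mathrm{Id}-L_{\lambda_{0}A}$, and the restriction of this operator to $(\bH^{G})^{\perp}$ has trivial kernel, hence is invertible. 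I would then run a Lyapunov--Schmidt reduction with respect to this splitting: writing $u=v+w$ with $v\in\bH^{G}$, $w\in(\bH^{G})^{\perp}$, the IFT solves the $(\bH^{G})^{\perp}$-projection of \eqref{eq:bifeq} for $w=w(v,\lambda)$ near $(0,\lambda_{0})$. By $G$-equivariance, $\nabla_{u}\Phi(v,\lambda)\in\bH^{G}$ whenever $v\in\bH^{G}$, so $w\equiv 0$ already satisfies that projected equation; uniqueness in the IFT then forces $w(v,\lambda)=0$. The residual equation is $\nabla_{u}\Phi(v,\lambda)=0$ on $\bH^{G}$, and a short calculation (constants are annihilated by $-\Delta_{M}$) identifies it with $\nabla_{u}F(v,\lambda)=0$ for $v\in\bR^{p}$. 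For $v$ small in $\bH$-norm (hence small in the Euclidean norm) and $\lambda$ near $\lambda_{0}\ne 0$, assumption (B6) forces $v=0$. Hence no nontrivial solutions cluster at $(0,\lambda_{0})$ in case (ii) either.

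The main obstacle I anticipate is organizing case (ii): aligning the $G$-equivariance of $L_{\lambda A}$ with the orthogonal splitting $\bH=\bH^{G}\oplus(\bH^{G})^{\perp}$, justifying the infinite-dimensional Lyapunov--Schmidt step on $(\bH^{G})^{\perp}$, and correctly matching the reduced equation on $\bH^{G}\cong\bR^{p}$ with the finite-dimensional equation on $\bR^{p}$ to which (B6) applies. Once this identification is in place, (B6) closes the argument in both cases.
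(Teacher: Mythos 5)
Your proposal is correct. For case (N1) it follows the same route as the paper: by \eqref{eq:spectrum}, if $\lambda_{0}\ne 0$ and $\lambda_{0}\notin\Lambda$, then $\ker(\mathrm{Id}-L_{\lambda_{0}A})=\ker A\otimes\bV_{-\Delta_{M}}(0)$, which is trivial when $\det A\ne 0$, and the implicit function theorem closes the argument exactly as the paper does. For case (N2) the paper does not give a self-contained argument but simply cites \cite[Lemma~3.5]{GolKluSte2}; you have instead written out the standard equivariant Lyapunov--Schmidt reduction: $L_{\lambda A}$ is $G$-equivariant because $G$ acts only on the domain variable and isometrically, so $\mathrm{Id}-L_{\lambda_{0}A}$ respects the splitting $\bH=\bH^{G}\oplus(\bH^{G})^{\perp}$; under (N2) the computed kernel lies in $\bH^{G}$, so the restriction to $(\bH^{G})^{\perp}$ is a Fredholm-index-zero injection, hence invertible, and the IFT solves the $(\bH^G)^{\perp}$-projection for $w=w(v,\lambda)$. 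Equivariance forces $w\equiv 0$, and the residual equation on $\bH^{G}\cong\bR^{p}$ reduces to $\nabla_{u}F(v,\lambda)=0$, which (B6) rules out for $v$ small nonzero and $\lambda$ near $\lambda_{0}\neq 0$. This is almost certainly the content of the cited lemma, so the substance of the two proofs agrees; yours has the advantage of being self-contained. Two small points worth noting if you polish this: (a) the identification $\bH^{G}\cong\bR^{p}$ uses simplicity of $\beta_1=0$ (i.e.\ connectedness of $M$), which the paper encodes implicitly by writing $\sigma(-\Delta_M)=\{0=\beta_1<\beta_2<\cdots\}$; (b) the argument uses that the $\delta$ in (B6) is uniform on a punctured neighbourhood of $\lambda_0$, which the statement of (B6) indeed provides.
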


In case \textnormal{(N1)} the conclusion follows from the implicit function theorem together with \eqref{eq:spectrum}. In case \textnormal{(N2)} the same necessary condition is obtained by the argument of \cite[Lemma~3.5]{GolKluSte2}.

By the lemma, every bifurcation parameter $\lambda_{0}\neq 0$ lies in $\Lambda$, defined in \eqref{eq:Lambda}, so under the above assumptions $\Lambda$ is the set of candidate local (and hence potential global) bifurcation values. The case $\lambda_0=0$ is not covered by the lemma; we therefore include it among the candidates (it can arise only when $\beta_k=0$).

Let us now turn the attention to the necessary condition for global bifurcation.
Let $T^{l}$ be a maximal torus of $G$. Since $G$ is an infinite compact Lie group, we have $l>0$, see \cite[Chapter~IV]{BrockerDieck}.
Fix the torus
$\bT:=T^{r}\times T^{l}$ and $\lambda_{0}\in\Lambda$. Since $\Lambda$ is discrete, there exists $\varepsilon>0$ with $[\lambda_{0}-\varepsilon,\lambda_{0}+\varepsilon]\cap\Lambda=\{\lambda_{0}\}$. Hence there exists $\delta>0$ such that
$
\nabla_{u}\Phi(\cdot,\lambda_{0}\pm\varepsilon)^{-1}(0)\cap B_{\delta}(0,\bH)=\{0\},
$
so the degrees $\deg^{\nabla}_{\bT}(\nabla_{u}\Phi(\cdot,\lambda_{0}\pm\varepsilon),B_{\delta}(\bH))$ are well defined. Define the bifurcation index
\begin{equation}\label{eq:indbif}
\bif_{\bT}(\lambda_{0})
:=
\deg^{\nabla}_{\bT}\bigl(\nabla_{u}\Phi(\cdot,\lambda_{0}+\varepsilon),B_{\delta}(\bH)\bigr)
-
\deg^{\nabla}_{\bT}\bigl(\nabla_{u}\Phi(\cdot,\lambda_{0}-\varepsilon),B_{\delta}(\bH)\bigr)
\ \in\ U(\bT).
\end{equation}
The index $\bif_{\bT}(\lambda_{0})$ measures the change of the $\bT$--equivariant degree as $\lambda$ crosses $\lambda_{0}$. By the Rabinowitz-type global bifurcation theorem \cite[Theorem~4.9]{Ryb2005milano}, a nontrivial change forces global bifurcation from $(0,\lambda_{0})\in\mathcal T$. Thus we obtain:

\begin{Theorem}\label{thm:bif-index-global}
If $\bif_{\bT}(\lambda_{0})\neq\Theta\in U(\bT)$, then $(0,\lambda_0)\in\mathcal T$ is a global bifurcation point.
\end{Theorem}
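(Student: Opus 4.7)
The plan is to verify that the hypotheses of the Rabinowitz-type global bifurcation theorem for equivariant gradient maps, cited as \cite[Theorem~4.9]{Ryb2005milano}, are in force, and then to read the conclusion directly from that theorem.

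First, I would check the structural hypotheses. The space $\bH$ is a separable Hilbert space equipped with an orthogonal $\bT$-representation; $\Phi(\cdot,\lambda)\in C^{2}(\bH,\mathbb{R})$ is $\bT$-invariant; and the decomposition $\nabla_u\Phi(u,\lambda)=u-L_{\lambda A}u+\nabla_u\eta(u,\lambda)$ recorded in Subsection~\ref{subsec:system} exhibits $\nabla_u\Phi(\cdot,\lambda)$ as a $\bT$-equivariant completely continuous perturbation of the identity, since both $L_{\lambda A}$ and $\nabla_u\eta(\cdot,\lambda)$ are $\bT$-equivariant and compact. By (B4) the trivial branch $\cT=\{0\}\times\mathbb{R}$ is contained in the solution set.

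Next, I would record that $\bif_{\bT}(\lambda_{0})$ is well defined as an element of $U(\bT)$. By the choice of $\varepsilon>0$ with $[\lambda_{0}-\varepsilon,\lambda_{0}+\varepsilon]\cap\Lambda=\{\lambda_{0}\}$ and by \eqref{eq:spectrum}, the operator $\mathrm{Id}-L_{(\lambda_{0}\pm\varepsilon)A}$ is an isomorphism on $\bH_{0}^{\perp}$, while on the finite-dimensional summand $\bH_{0}\cong\mathbb{R}^{p}$ of constant functions the equation $\nabla_{u}\Phi(u,\lambda_{0}\pm\varepsilon)=0$ reduces to $\nabla_{u}F(\cdot,\lambda_{0}\pm\varepsilon)=0$, for which (B6) provides a $\delta>0$ isolating $0$. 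Shrinking $\delta$ if necessary yields a common ball $B_{\delta}(\bH)$ in which $0$ is the only zero of $\nabla_{u}\Phi(\cdot,\lambda_{0}\pm\varepsilon)$, so both degrees in \eqref{eq:indbif} are defined and their difference lies in $U(\bT)$.

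Finally, the hypothesis $\bif_{\bT}(\lambda_{0})\ne\Theta$ states that the $\bT$-equivariant gradient degree jumps across $\lambda=\lambda_{0}$. Invoking \cite[Theorem~4.9]{Ryb2005milano} then yields the Rabinowitz alternative for $\cC(\lambda_{0})$: either $\cC(\lambda_{0})$ is unbounded in $\bH\times\mathbb{R}$, or $\cC(\lambda_{0})\cap\cT$ contains a point distinct from $(0,\lambda_{0})$. By the definition given before the statement, this is precisely the assertion that $(0,\lambda_{0})$ is a global bifurcation point. There is no substantial obstacle: the argument is a direct appeal to an existing equivariant Rabinowitz-type theorem, and the only delicate bookkeeping is the isolation of the trivial zero on both sides of $\lambda_{0}$, handled jointly by the necessary condition defining $\Lambda$ and by (B6) on the constant summand.
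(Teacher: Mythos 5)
Your proposal is correct and follows essentially the same route as the paper: the theorem is a direct consequence of the $\bT$-equivariant Rabinowitz alternative of \cite[Theorem~4.9]{Ryb2005milano}, once one observes that $\nabla_u\Phi$ is a $\bT$-equivariant completely continuous perturbation of the identity, that the two degrees in \eqref{eq:indbif} are defined on a common ball $B_{\delta}(\bH)$, and that a nontrivial index means the degree jumps across $\lambda_0$. The paper states this in the sentence preceding the theorem without further elaboration, so your more careful verification of the hypotheses (in particular, that (B6) together with the isomorphism of $\cL_\pm$ on $\bH_0^{\perp}$ isolates the trivial zero at the levels $\lambda_0\pm\varepsilon$) is consistent with, and slightly more detailed than, what is written there.
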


We will use the bifurcation index and Theorem~\ref{thm:bif-index-global} to study global bifurcations of \eqref{eq:bifeq}.
The next result is one of the main theorems of this paper and provides a sufficient condition for global bifurcation at the values $\lambda_{0}\in\Lambda$.

\begin{Theorem}\label{thm:global-bifurcation}
Assume \textnormal{(B1)}--\textnormal{(B6)}. Fix $\beta_{k_{0}}\in\sigma(-\Delta_{M})\setminus\{0\}$ and $\alpha_{j_{0}}\in\sigma(A)\setminus\{0\}$, and set $\lambda_{0}:=\beta_{k_{0}}/\alpha_{j_{0}}$.
Suppose that $\bV_{-\Delta_{M}}(\beta_{k_{0}})$ is a nontrivial $T^{l}$-representation.
If, moreover, \textnormal{(N1)} or \textnormal{(N2)} holds,
then $(0,\lambda_{0})\in\cT$ is a global bifurcation point.
\end{Theorem}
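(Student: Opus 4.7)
The plan is to invoke Theorem~\ref{thm:bif-index-global} by proving $\bif_{\bT}(\lambda_{0})\neq\Theta$. First I would factor the bifurcation index using the $\bT$-invariant decomposition $\bH=\bH_0\oplus\bH_0^{\perp}$: near $u=0$ the nonlinearity $\nabla_u\eta$ is of order $o(\|u\|)$ (since $\nabla_u^2\eta(0,\lambda)=0$), and on $\bH_0^{\perp}$ the operator $\mathrm{Id}-L_{\lambda A}$ is an isomorphism at $\lambda=\lambda_0\pm\varepsilon$ (as $[\lambda_0-\varepsilon,\lambda_0+\varepsilon]\cap(\Lambda\cup\{0\})=\{\lambda_0\}$). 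The product formula for the equivariant gradient degree then yields
\begin{equation*}
\deg^{\nabla}_{\bT}\bigl(\nabla_{u}\Phi(\cdot,\lambda),B_{\delta}(\bH)\bigr)=D_{F}\star\deg^{\nabla}_{\bT}\bigl(-\mathrm{Id},B(\cW(\lambda))\bigr),
\end{equation*}
where $D_{F}\in U(\bT)$ is the $\bT$-equivariant degree of $\nabla_{u}\Phi|_{\bH_{0}}=-\nabla_{u}F(\cdot,\lambda)$ on $B_{\delta}(\bR^{p})$. By (B6), homotopy invariance in $\lambda$, and Remark~\ref{rem:lift-degree-to-big-torus}, $D_{F}$ is independent of $\lambda\in[\lambda_{0}-\varepsilon,\lambda_{0}+\varepsilon]$ and nontrivial, admitting the expansion $D_{F}=a_{0}\bI+\sum_{H\in\sub(T^{r})\setminus\{T^{r}\}}a_{H}\chi_{\bT}(\bT/(H\times T^{l})^{+})$ with $(a_{0},(a_{H}))\neq 0$. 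Combining with \eqref{eq:cW-piecewise} (giving $\cW(\lambda_{0}+\varepsilon)=\cW(\lambda_{0}-\varepsilon)\oplus\cV(\lambda_{0})$ for $\lambda_{0}>0$, symmetric for $\lambda_{0}<0$) and the multiplicativity of $\deg^{\nabla}_{\bT}(-\mathrm{Id},\cdot)$ under direct sums,
\begin{equation*}
\bif_{\bT}(\lambda_{0})=\pm D_{F}\star D_{W}\star(D_{V}-\bI),
\end{equation*}
with $D_{W}=\deg^{\nabla}_{\bT}(-\mathrm{Id},B(\cW(\lambda_{0}\mp\varepsilon)))$ and $D_{V}=\deg^{\nabla}_{\bT}(-\mathrm{Id},B(\cV(\lambda_{0})))$.

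Next I would extract the leading nontrivial piece of this product. By \eqref{eq:deg-id},
\begin{equation*}
D_{V}=(-1)^{\dim\cV(\lambda_{0})}(\bI-S_{V})+\cO_{V},\qquad S_{V}=\sum_{i}k_{i}\chi_{\bT}(\bT/H_{(m_{i},n_{i})}^{+}),
\end{equation*}
with positive multiplicities $k_{i}$ and $\cO_{V}$ supported on generators $\chi_{\bT}(\bT/H^{+})$ with $\codim H\geq 2$, and analogously for $D_{W}$. By \eqref{eq:cvlambda}, $\cV(\lambda_{0})$ contains the summand $\bV_{A}(\alpha_{j_{0}})\otimes\bV_{-\Delta_{M}}(\beta_{k_{0}})$, and Lemma~\ref{lem:T-product-decomp} combined with the hypothesis that $\bV_{-\Delta_{M}}(\beta_{k_{0}})$ is a nontrivial $T^{l}$-representation forces $S_{V}$ to contain at least one generator $\chi_{\bT}(\bT/H_{(m_{0},n_{0})}^{+})$ with $n_{0}\neq 0$. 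Distributing the triple product and using Corollary~\ref{cor:bigcodim} to absorb every term involving $\cO_{W}$, $\cO_{V}$, or $S_{W}\star S_{V}$ into a remainder $\cR$ of codim $\geq 2$,
\begin{equation*}
\bif_{\bT}(\lambda_{0})=\mp\epsilon_{W}\,D_{F}\star S_{V}+D_{F}\star\cR,\qquad \epsilon_{W}:=(-1)^{\dim\cW(\lambda_{0}\mp\varepsilon)},
\end{equation*}
in the even-$\dim\cV(\lambda_{0})$ case; the odd case produces an extra $\mp 2\epsilon_{W}D_{F}$ summand handled in parallel.

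Finally I would pin down a subgroup $K\in\sub(\bT)$ whose coefficient is forced nonzero. If $a_{0}\neq 0$, take $K=H_{(m_{0},n_{0})}$ (codim $1$): only $a_{0}\bI\star S_{V}$ contributes to $[\bif_{\bT}(\lambda_{0})]_{K}$, giving $\mp\epsilon_{W}a_{0}k_{(m_{0},n_{0})}\neq 0$; no codim-$\geq 2$ remainder can contaminate a codim-$1$ coefficient. If $a_{0}=0$, then $D_{F}$ itself has the form of $A$ in Lemma~\ref{lem:inters}; applying that lemma with $A=D_{F}$ and $B=S_{V}$ produces $K=(H\times T^{l})\cap H_{(m_{0},n_{0})}$ at which $[D_{F}\star S_{V}]_{K}\neq 0$, and a dedicated verification---using Lemma~\ref{lem:intersections}(2) to reconstruct $H$ uniquely from $K$ (possible because $n_{0}\neq 0$), and the dimension count of Lemma~\ref{lem:dim}---shows that no contribution from $D_{F}\star\cR$ is supported on this particular $K$. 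The main obstacle is precisely this last verification: the triple product generates many competing codim-$\geq 2$ terms (including products $\chi_{\bT}(\bT/(H\times T^{l})^{+})\star\chi_{\bT}(\bT/H_{(m',n')}^{+})\star\chi_{\bT}(\bT/H_{(m,n)}^{+})$ arising from $D_{F}\star S_{W}\star S_{V}$, together with every pairing involving $\cO_{W}$ or $\cO_{V}$), and only the rigidity encoded in Lemma~\ref{lem:intersections} rules out cancellation at the chosen $K$.
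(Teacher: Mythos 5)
Your proposal follows the paper's strategy: factor the degree via the invariant splitting $\bH=\bH_0\oplus\bH_0^\perp$, express the bifurcation index as a product, identify a nontrivial weight $\bR[1,(m_0,n_0)]\subset\cV(\lambda_0)$ with $n_0\ne 0$, and pin down a subgroup of the form $(H\times T^l)\cap H_{(m_0,n_0)}$ whose coefficient cannot cancel, using Lemma~\ref{lem:inters} / Lemma~\ref{lem:intersections}. The case splittings ($a_0\ne 0$ vs.\ $a_0=0$, $\dim\cV(\lambda_0)$ even vs.\ odd) and the conclusions agree with the paper.

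Two points where your route diverges and where the sketch is underspecified. First, the paper does \emph{not} carry $D_W$ through the computation: it invokes the invertibility of $\deg^{\nabla}_{\bT}(-\mathrm{Id},B_\delta(\cW(\lambda_0-\varepsilon)))$ in $U(\bT)$ (via \cite[Thm.~3.11]{GeRy}) to cancel that factor outright and reduce to \eqref{eq:reducedbif}, i.e.\ $D_F\star(D_V-\bI)\ne\Theta$. Your expansion $\bif_\bT=\mp\epsilon_W D_F\star S_V+D_F\star\cR$ is correct, but it produces the extra cross terms $S_W\star S_V$, $\cO_W\star S_V$, etc., that the paper never has to fight; this is exactly why your ``main obstacle'' looks harder than it is in the paper's version. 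Second — and this is the actual gap — when $a_0=0$ you apply Lemma~\ref{lem:inters} with $A=D_F$, i.e.\ all codimensions of $D_F$, and then claim that ``no contribution from $D_F\star\cR$ is supported on this particular $K$.'' That claim is not automatic: the proof of Lemma~\ref{lem:inters} picks \emph{some} $H$ with $n_H\ne 0$, giving $K=(H\times T^l)\cap H_{(m_0,n_0)}$ of codimension $\codim H+1$; if $\codim H$ is not minimal, $D_F\star\cR$ (which has codimension $\ge c_0+2$, where $c_0$ is the minimal codimension appearing in $D_F$) can in principle reach $K$. The fix is what the paper does: restrict $A$ to the codimension-$c_0$ slice of $D_F$. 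Then $\codim K=c_0+1$, while \emph{every} other contribution to the triple product — $D_F^{>c_0}\star S_V$, $D_F\star\cO_V$, all terms involving $S_W$ or $\cO_W$ — lands in codimension $\ge c_0+2$ by additivity of codimension under $\star$ in $U(\bT)$ (a direct consequence of \eqref{eq:UTmultiplication}). With that one sentence inserted, your argument closes; without it, the non-contamination step does not follow merely from Lemmas~\ref{lem:intersections} and~\ref{lem:dim}. The odd-$\dim\cV$ case you leave to ``in parallel'' is genuinely simpler and matches the paper: the coefficient of $\bI$ in $D_V-\bI$ is $-2$, so the codimension-$c_0$ part of the product is $-2D_F^{c_0}\ne\Theta$ and no intersection lemma is needed.
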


\begin{proof}
For $\varepsilon>0$ as above, set $\Phi_{\pm}(u):=\Phi(u,\lambda_{0}\pm\varepsilon)$ and let
$\cL_{\pm}:=\nabla^{2}_{u}\Phi_{\pm}(0)=\mathrm{Id}-L_{(\lambda_{0}\pm\varepsilon)A}$.
By the choice of $\varepsilon$, the degrees $\deg^{\nabla}_{\bT}(\nabla_{u}\Phi_{\pm},B_{\delta}(\bH))$, where $\delta>0$ is sufficiently small, are well-defined.
By Theorem~\ref{thm:bif-index-global}, it suffices to show that the index $\bif_{\bT}(\lambda_{0})$ from \eqref{eq:indbif} is nonzero.

Arguing as in the proof of Theorem~3.1 in \cite{GolRybSte2025}, we obtain
\begin{equation}\label{eq:splitted}
\deg^{\nabla}_{\bT}\bigl(\nabla\Phi_{\pm},B_{\delta}(\bH)\bigr)
=
\deg^{\nabla}_{\bT}\bigl((\nabla\Phi_{\pm})|_{\bH_{0}}, B_{\delta}(\bH_{0})\bigr)
\star
\deg^{\nabla}_{\bT}\bigl(\cL_{\pm}\!\mid_{\bH_{0}^{\perp}},B_{\delta}(\bH_{0}^{\perp})\bigr),
\end{equation}
so the computation splits into the (possibly degenerate) block on $\bH_{0}$ and an isomorphism on its $\bT$–invariant complement.

To compute the first factor, note that on $\bH_{0}$  we have
$
(\nabla\Phi_{\pm})|_{\bH_{0}}(u)=\nabla_{u}F(u,\lambda_{0}\pm\varepsilon).
$
By homotopy invariance of the equivariant degree and (B6), since $\lambda_{0}\neq 0$, the degrees
$\deg^{\nabla}_{T^{r}}(\nabla_{u}F(\cdot,\lambda_{0}\pm\varepsilon), B_{\delta}(\mathbb{R}^{p}))$
do not depend on the chosen level. By (B6) there exist integers $n_{0}$ and $\{n_{H}\}_{H\in\sub(T^{r})\setminus\{T^{r}\}}$, not all zero and only finitely many nonzero, such that
\[
\deg^{\nabla}_{T^{r}}\bigl(\nabla_{u}F(\cdot,\lambda_{0}\pm\varepsilon), B_{\delta}(\mathbb{R}^{p})\bigr)
=
n_{0}\,\bI+
\sum_{H\in\sub(T^{r})\setminus\{T^{r}\}} n_{H}\,\chi_{T^{r}}\bigl((T^{r}/H)^{+}\bigr)\in U(T^{r}).
\]
By Remark~\ref{rem:lift-degree-to-big-torus}, this lifts to
\[
\deg^{\nabla}_{\bT}\bigl((\nabla\Phi_{\pm})|_{\bH_{0}}, B_{\delta}(\bH_{0})\bigr)
=
n_{0}\,\bI+
\sum_{H\in\sub(T^{r})\setminus\{T^{r}\}} n_{H}\,\chi_{\bT}\bigl((\bT/(H\times T^{l}))^{+}\bigr)\in U(\bT).
\]

On the other hand, by \cite[Thm.~4.7]{Ryb2005milano},
\[
\deg^{\nabla}_{\bT}\bigl(\cL_{\pm}\mid_{\bH_{0}^{\perp}},B_{\delta}(\cR_{2})\bigr)
=\deg^{\nabla}_{\bT}\bigl(-\mathrm{Id},B_{\delta}(\cW(\lambda_{0}\pm\varepsilon))\bigr).
\]

From now on we assume $\lambda_0>0$; the case $\lambda_0<0$ is analogous.
Since $\lambda_0>0$, by \eqref{eq:cW-piecewise} and \eqref{eq:splitted},
\begin{equation}\label{eq:bifind-final}
\begin{aligned}
\bif_{\bT}(\lambda_{0})=\ &
\deg^{\nabla}_{\bT}\bigl((\nabla\Phi_{\pm})|_{\bH_0},B_{\delta}(\bH_0)\bigr)\star
\deg^{\nabla}_{\bT}\bigl(-\mathrm{Id},B_{\delta}(\cW(\lambda_{0}-\varepsilon))\bigr)
\\
&\star\bigl(\deg^{\nabla}_{\bT}\bigl(-\mathrm{Id},B_{\delta}(\cV(\lambda_{0}))\bigr)-\bI\bigr)
\end{aligned}
\end{equation}
and since $\deg^{\nabla}_{\bT}(-\mathrm{Id},B_{\delta}(\cW(\lambda_{0}-\varepsilon)))$ is invertible in $U(\bT)$ (see, e.g., \cite[Thm.~3.11]{GeRy}, \cite[Thm.~2.1]{GolRyb1}), the nontriviality of $\bif_{\bT}(\lambda_{0})$ reduces to
\begin{equation}\label{eq:reducedbif}
\deg^{\nabla}_{\bT}\bigl((\nabla\Phi_{\pm})|_{\bH_0},B_{\delta}(\bH_0)\bigr)
\ \star\
\bigl(\deg^{\nabla}_{\bT}\bigl(-\mathrm{Id},B_{\delta}(\cV(\lambda_{0}))\bigr)-\bI\bigr)
\ne \Theta.
\end{equation}

Since $\bV_{-\Delta_{M}}(\beta_{k_{0}})$ is a nontrivial $T^{l}$–representation, the tensor product $\bV_{A}(\alpha_{j_{0}})\otimes\bV_{-\Delta_{M}}(\beta_{k_{0}})$ is a nontrivial $\bT$–representation and consequently so is $\cV(\lambda_{0})$. Moreover, there exists $\bR[1,(m,n)]\subset\cV(\lambda_{0})$ with $n\neq 0$. Applying Theorem~\ref{thm:UTnmult} to $\cV(\lambda_{0})$ yields
\[
\deg^{\nabla}_{\bT}\bigl(-\mathrm{Id},B_{\delta}(\cV(\lambda_{0}))\bigr)
=(-1)^{q_{0}}\Bigl(\bI-\sum_{(m,n)\in\mathbb{Z}^{r+l}\setminus\{0\}} q_{(m,n)}\,\chi_{\bT}(\bT/H_{(m,n)}^{+})\Bigr)
+\cO\bigl(\cV(\lambda_{0})\bigr),
\]
where $q_{(m,n)}\in\mathbb{Z}$ with at least one $q_{(m,n)}\neq 0$ for some $n\neq 0$, and where $\cO(\cV(\lambda_{0}))$ is a $\mathbb{Z}$–linear combination of $\chi_{\bT}(\bT/H^{+})$ with $\dim H\le \dim\bT-2$.

To finish the proof, assume first that $n_{0}\ne 0$. By Corollary~\ref{cor:bigcodim}, the codimension-1 part of the product in \eqref{eq:reducedbif} is
\[
n_{0}\sum_{(m,n)\in\mathbb{Z}^{r+l}\setminus\{0\}} q_{(m,n)}\,\chi_{\bT}(\bT/H_{(m,n)}^{+})
+\bigl((-1)^{q_{0}}-1\bigr)\sum_{  \codim H=1 } n_{H}\,\chi_{\bT}\bigl(\bT/(H\times T^{l})^{+}\bigr)\in U(\bT),
\]
where $n_{H}\in\mathbb{Z}$ (possibly all zero). Since there exists $(m,n)$ with $n\ne 0$ and $q_{(m,n)}\ne 0$, and codimension-1 subgroups coming from $T^{r}$ appear in the second summand as $H_{(k,0)}=H\times T^{l}$, this element is nonzero. Hence $\bif_{\bT}(\lambda_{0})\ne\Theta$.

When $n_{0}=0$,
\[
\deg^{\nabla}_{\bT}\bigl((\nabla\Phi_{\pm})|_{\bH_0},B_{\delta}(\bH_0)\bigr)
=\sum_{H\in\sub(T^{r})\setminus \{T^r\}} n_{H} \chi_{\bT}\bigl(\bT/(H\times T^{l})^{+}\bigr).
\]
Let $c_0$ be the smallest codimension of $H$ for which $n_H\neq 0$.

If $\dim \cV(\lambda_0)$ is odd, the coefficient of $\bI$ in
$\deg^{\nabla}_{\bT}(-\mathrm{Id},B_{\delta}(\cV(\lambda_{0})))-\bI$ equals $-2$. Hence, using Corollary~\ref{cor:bigcodim}, the codimension-$c_0$ part of the product in \eqref{eq:reducedbif} is
$$
-2\sum_{\codim H=c_0} n_{H} \chi_{\bT}\bigl(\bT/(H\times T^{l})^{+}\bigr).
$$
By the choice of $c_0$ this sum is nonzero, and consequently $\bif_{\bT}(\lambda_{0})\neq \Theta$.

Suppose now that $\dim \cV(\lambda_0)$ is even.
Applying Lemma~\ref{lem:inters} with
\[
A=\sum_{\codim H=c_0} n_{H} \chi_{\bT}\bigl(\bT/(H\times T^{l})^{+}\bigr),\qquad
B=\sum_{(m,n)\in\mathbb{Z}^{r+l}\setminus\{0\}} q_{(m,n)} \chi_{\bT}(\bT/H_{(m,n)}^{+}),
\]
and using $q_{(m,n)}\ne 0$ with $n\ne 0$ as above, we obtain that the product in \eqref{eq:reducedbif} is nontrivial. Hence $\bif_{\bT}(\lambda_{0})\ne \Theta$.
Using Theorem \ref{thm:bif-index-global} we finish the proof.
\end{proof}

\begin{Remark}
If \textnormal{(N2)} holds, then $\bV_{-\Delta_M}(\beta_k)^{G}=\{0\}$ for every $\beta_k\ne 0$. Hence $\bV_{-\Delta_M}(\beta_k)$ is a nontrivial $G$–representation and, if $G$ is connected, by \cite[Lemma~3.3]{GarRyb}, a nontrivial $T^{l}$–representation. Thus, under \textnormal{(N2)} and for connected $G$, the assumption of Theorem~\ref{thm:global-bifurcation} is satisfied for all $\beta_{k}\in\sigma(-\Delta_M)\setminus\{0\}$.
\end{Remark}

In the absence of the nontrivial $T^{l}$–representation assumption on $\bV_{-\Delta_M}(\beta_{k_0})$, a parity condition on  the dimension of  $\cV(\lambda_0)$ suffices to force a nonzero bifurcation index. In particular, if $\dim\cV(\lambda_0)$ is odd, then global bifurcation occurs. Namely, we have the following:

\begin{Theorem}\label{thm:global-bifurcation-trivial}
Assume \textnormal{(B1)}--\textnormal{(B6)}. Fix $\beta_{k_{0}}\in\sigma(-\Delta_{M})\setminus\{0\}$ and $\alpha_{j_{0}}\in\sigma(A)\setminus\{0\}$, and set $\lambda_{0}:=\beta_{k_{0}}/\alpha_{j_{0}}$.
Assume that $\dim \cV(\lambda_0)$ is odd.
If, moreover, \textnormal{(N1)} or \textnormal{(N2)} holds,
then $(0,\lambda_{0})\in\cT$ is a global bifurcation point.
\end{Theorem}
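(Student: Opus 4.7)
The plan is to mirror the structure of the proof of Theorem~\ref{thm:global-bifurcation}, replacing the nontriviality of $\bV_{-\Delta_M}(\beta_{k_0})$ as a $T^l$-representation with the parity of $\dim\cV(\lambda_0)$. First I would reuse verbatim the reduction carried out there: choose $\varepsilon>0$ with $[\lambda_0-\varepsilon,\lambda_0+\varepsilon]\cap\Lambda=\{\lambda_0\}$ and $\delta>0$ sufficiently small so that the degrees $\deg^{\nabla}_{\bT}(\nabla\Phi_\pm,B_\delta(\bH))$ are defined; apply the splitting \eqref{eq:splitted}; identify the factor on $\bH_0^{\perp}$ with $\deg^{\nabla}_{\bT}(-\mathrm{Id},B_\delta(\cW(\lambda_0\pm\varepsilon)))$ via \cite[Thm.~4.7]{Ryb2005milano}; assume w.l.o.g.\ $\lambda_0>0$; and use invertibility of $\deg^{\nabla}_{\bT}(-\mathrm{Id},B_\delta(\cW(\lambda_0-\varepsilon)))$ in $U(\bT)$ to reduce, via \eqref{eq:bifind-final}, the nontriviality of $\bif_{\bT}(\lambda_0)$ to \eqref{eq:reducedbif}.

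Next I would exploit the parity assumption. By Theorem~\ref{thm:UTnmult} (equivalently \eqref{eq:deg-id}), since $\dim\cV(\lambda_0)$ is odd,
\[
\deg^{\nabla}_{\bT}\bigl(-\mathrm{Id},B_\delta(\cV(\lambda_0))\bigr)=-\bI+R,
\]
where $R$ is a $\mathbb{Z}$-linear combination of elements $\chi_{\bT}(\bT/H^{+})$ with $\codim H\ge 1$; hence the coefficient of $\bI$ in $\deg^{\nabla}_{\bT}(-\mathrm{Id},B_\delta(\cV(\lambda_0)))-\bI$ equals $-2$. Writing
\[
\deg^{\nabla}_{\bT}\bigl((\nabla\Phi_\pm)|_{\bH_0},B_\delta(\bH_0)\bigr)
= n_0\bI+\sum_{H\in\sub(T^r)\setminus\{T^r\}} n_H\,\chi_{\bT}\bigl(\bT/(H\times T^l)^{+}\bigr),
\]
I split into two cases. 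If $n_0\ne 0$, then the $\bI$-coefficient of the product in \eqref{eq:reducedbif} equals $-2n_0\ne 0$, so that product is nontrivial in $U(\bT)$. If $n_0=0$, I invoke verbatim the $n_0=0$/odd-dimensional subcase of the proof of Theorem~\ref{thm:global-bifurcation}: let $c_0$ be the smallest codimension of an $H\in\sub(T^r)\setminus\{T^r\}$ with $n_H\ne 0$; by Corollary~\ref{cor:bigcodim}, the codimension-$c_0$ part of the product in \eqref{eq:reducedbif} equals
\[
-2\sum_{\codim H=c_0} n_H\,\chi_{\bT}\bigl(\bT/(H\times T^l)^{+}\bigr),
\]
which is nonzero by the choice of $c_0$ and the pairwise distinctness of the generators $\chi_{\bT}(\bT/(H\times T^l)^{+})$. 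In either case Theorem~\ref{thm:bif-index-global} delivers global bifurcation at $(0,\lambda_0)$.

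There is no genuine obstacle: what was in Theorem~\ref{thm:global-bifurcation} the potentially delicate interplay between the codimension-$1$ weights coming from $\cV(\lambda_0)$ (which required the nontrivial $T^l$-action to produce some $(m,n)$ with $n\ne 0$) is here bypassed entirely by parity. The mild subtlety worth flagging in the write-up is that when $\cV(\lambda_0)$ happens to be a trivial $\bT$-representation, the whole bifurcation index collapses essentially to a multiple of $\deg^{\nabla}_{T^r}(\nabla_u F(\cdot,\lambda_0\pm\varepsilon),B_\delta(\mathbb{R}^p))$, so the combination of hypothesis (B6) and the odd-dimensional hypothesis is exactly what forces the answer to be nonzero.
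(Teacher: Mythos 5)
Your proposal is correct and follows the same route as the paper's own proof: reduce via \eqref{eq:splitted}, \eqref{eq:bifind-final} and invertibility of the $\cW(\lambda_0-\varepsilon)$-factor to \eqref{eq:reducedbif-trivial}, read off $-2$ as the $\bI$-coefficient of $\deg^{\nabla}_{\bT}(-\mathrm{Id},B_\delta(\cV(\lambda_0)))-\bI$ from Theorem~\ref{thm:UTnmult}, and split into the cases $n_0\ne 0$ (nonzero $\bI$-coefficient) and $n_0=0$ (nonzero codimension-$c_0$ part). The closing observation that when $\cV(\lambda_0)$ is $\bT$-trivial the index reduces to $-2$ times the lifted $T^r$-degree of $\nabla_u F$ is not in the paper but is accurate.
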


\begin{proof}
 As in the proof of Theorem~\ref{thm:global-bifurcation}, it suffices to show
\begin{equation}\label{eq:reducedbif-trivial}
\deg^{\nabla}_{\bT}\bigl((\nabla\Phi_{\pm})|_{\bH_0},B_{\delta}(\bH_0)\bigr)
\ \star\
\bigl(\deg^{\nabla}_{\bT}\bigl(-\mathrm{Id},B_{\delta}(\cV(\lambda_{0}))\bigr)-\bI\bigr)
\ne \Theta,
\end{equation}
with the same notation as before.

By (B5) and Remark \ref{rem:lift-degree-to-big-torus} there exist integers $n_0$ and $\{n_H\}_{H\in\sub(T^r)\setminus\{T^r\}}$, not all zero, only finitely many nonzero, such that
\[
\deg^{\nabla}_{\bT}\bigl((\nabla\Phi_{\pm})|_{\bH_0},B_{\delta}(\bH_0)\bigr)
=
n_0 \bI+\sum_{H\in\sub(T^r)\setminus\{T^r\}}n_H \chi_{\bT}\bigl(\bT/(H\times T^l)^+\bigr).
\]
Since $\dim\cV(\lambda_0)$ is odd, we have
\[
\deg^{\nabla}_{\bT}\bigl(-\mathrm{Id},B_{\delta}(\cV(\lambda_{0}))\bigr)-\bI
= -2 \bI+\text{(terms with codimension $\ge1$)}.
\]

If $n_0\neq 0$, then the codimension-$0$ part of the product in \eqref{eq:reducedbif-trivial} equals $(-2n_0)\bI\neq 0$, so \eqref{eq:reducedbif-trivial} holds.

If $n_0=0$, let $c_0$ be the minimal codimension for which some $n_H\neq 0$. The codimension-$c_0$ part of the product in \eqref{eq:reducedbif-trivial} is then
\[
-2\sum_{\codim H=c_0} n_H \chi_{\bT}\bigl(\bT/(H\times T^l)^+\bigr)\neq 0,
\]
again proving \eqref{eq:reducedbif-trivial}. In both cases $\bif_{\bT}(\lambda_0)\neq\Theta$ and Theorem~\ref{thm:bif-index-global} yields the claim.
\end{proof}

\begin{Remark}
If $\bV_{-\Delta_M}(\beta_{k_0})$ is a trivial $G$–representation and $\dim \cV(\lambda_0)$ is even, the above results can still be refined. This would require a careful treatment of the degree in \textnormal{(B6)} and a detailed analysis of intersections of subgroups of a torus. The refinement is feasible but technical and adds little insight, so we omit it.

If the symmetry group $G$ is finite, it contains no nontrivial torus. In this case we are effectively confined to the $T^{r}$–equivariant framework and computations in $U(T^{r})$. One can still detect global bifurcation as in the theorem above, but the simple separation via the coefficient of $\chi_{\bT}(\bT/H_{(m,n)}^+)$ appearing in one factor and not in the other for some $n\neq 0$ is no longer available. Consequently, one must examine the structure of torus subgroups more closely to enforce bifurcation, and the resulting criteria for global bifurcation become substantially more technical.
\end{Remark}

We conclude this section with the case where the criterion of Lemma~\ref{lem:nec} fails. In this situation there is no a priori “suspected” level, but one can still track the variation of the equivariant degree. If the degree cannot be defined at some $\lambda$ (because nontrivial solutions accumulate at the trivial branch), then a local bifurcation occurs; if the degree is defined on both sides of a value and can be computed, a change in the degree necessarily signals a global bifurcation somewhere in between.

Let us formalize this. Arguing as in \cite[Theorem.~3.8]{GolKluSte2} and Theorem \ref{thm:global-bifurcation} and \ref{thm:global-bifurcation-trivial}, we obtain the following alternative.

\begin{Theorem}\label{thm:local-or-global}
Assume \textnormal{(B1)}--\textnormal{(B6)} and fix $\lambda_0\in\Lambda$.
Let $\alpha_{j_0}\in\sigma(A)\setminus\{0\}$ and $\beta_{k_0}\in\sigma(-\Delta_M)\setminus\{0\}$ satisfy $\lambda_0=\beta_{k_0}/\alpha_{j_0}$.
Assume that $\bV_{-\Delta_M}(\beta_{k_0})$ is a nontrivial $T^{l}$--rep\-re\-sen\-ta\-tion or that $\dim\cV(\lambda_0)$ is odd.
For every $\varepsilon>0$ such that
$(\lambda_0-\varepsilon,\lambda_0+\varepsilon)\cap
\Lambda
=\{\lambda_0\},$
at least one of the following holds:
\begin{enumerate}[(i)]
\item a local bifurcation of solutions of \eqref{eq:bifeq} occurs from $(0,\lambda)\in\cT$ for every $\lambda\in(\lambda_0-\varepsilon,\lambda_0)$ or for every $\lambda\in(\lambda_0,\lambda_0+\varepsilon)$;
\item a global bifurcation of solutions of \eqref{eq:bifeq} occurs from $(0,\widehat\lambda)\in\cT$ for some $\widehat\lambda\in(\lambda_0-\varepsilon,\lambda_0+\varepsilon)$.
\end{enumerate}
\end{Theorem}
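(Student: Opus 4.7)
The plan is to argue by contrapositive. Assume that the first alternative (i) fails on \emph{both} one-sided intervals; then there exist levels $\lambda_-\in(\lambda_0-\varepsilon,\lambda_0)$ and $\lambda_+\in(\lambda_0,\lambda_0+\varepsilon)$ at each of which $(0,\lambda_\pm)$ is not a local bifurcation point. By the negation of this notion, each $\lambda_\pm$ admits some $\delta_\pm>0$ with $\nabla_u\Phi(\cdot,\lambda_\pm)^{-1}(0)\cap B_{\delta_\pm}(\bH)=\{0\}$; setting $\delta:=\min(\delta_-,\delta_+)$, the equivariant gradient degrees
\[
d_\pm:=\deg^\nabla_\bT\bigl(\nabla_u\Phi(\cdot,\lambda_\pm),B_\delta(\bH)\bigr)\in U(\bT)
\]
are both well defined. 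I would then form their difference $d_+-d_-$, which plays here the role of the bifurcation index $\bif_\bT(\lambda_0)$ from \eqref{eq:indbif}.

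The key step is to show $d_+-d_-\neq\Theta$ by rerunning the computation from the proofs of Theorems~\ref{thm:global-bifurcation} and~\ref{thm:global-bifurcation-trivial} with $\lambda_\pm$ in place of $\lambda_0\pm\varepsilon$. This works because both factors of the splitting \eqref{eq:splitted} are insensitive to the precise value of $\lambda$ on each side of $\lambda_0$: the factor on $\bH_0$ equals $\deg^{\nabla}_{T^r}(\nabla_u F(\cdot,\lambda_\pm),B_\delta(\bR^p))$ lifted to $\bT$, which by (B6) and homotopy invariance is constant in $\lambda\neq 0$; and the factor on $\bH_0^\perp$ equals $\deg^\nabla_\bT(-\mathrm{Id},B_\delta(\cW(\lambda_\pm)))$, where $\cW(\lambda_-)=\cW(\lambda_0-\varepsilon)$ and $\cW(\lambda_+)=\cW(\lambda_0+\varepsilon)$ by \eqref{eq:cW-piecewise} together with the assumption $(\lambda_0-\varepsilon,\lambda_0+\varepsilon)\cap\Lambda=\{\lambda_0\}$. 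Therefore the reduction \eqref{eq:reducedbif} remains valid, and its nontriviality follows either from the weight argument of Theorem~\ref{thm:global-bifurcation} (invoking Lemma~\ref{lem:inters} and the existence of a weight $(m,n)\in\bZ^{r+l}\setminus\{0\}$ with $n\neq 0$ inside $\cV(\lambda_0)$, when $\bV_{-\Delta_M}(\beta_{k_0})$ is a nontrivial $T^l$-representation), or from the parity argument of Theorem~\ref{thm:global-bifurcation-trivial} (when $\dim\cV(\lambda_0)$ is odd).

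With $d_+-d_-\neq\Theta$ established, I would conclude via the equivariant Rabinowitz-type global bifurcation theorem \cite[Theorem~4.9]{Ryb2005milano}: the jump of the equivariant degree across the parameter interval $[\lambda_-,\lambda_+]$ forces the existence of some $\widehat\lambda\in(\lambda_-,\lambda_+)\subset(\lambda_0-\varepsilon,\lambda_0+\varepsilon)$ from which a global continuum of nontrivial solutions of \eqref{eq:bifeq} emanates, establishing alternative (ii).

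The main obstacle is bookkeeping rather than a new idea: one must verify that the single radius $\delta$ chosen in the first paragraph serves \emph{simultaneously} as a radius at which both degrees are defined and as a radius on which the splitting argument and the decomposition \eqref{eq:bifind-final} remain legitimate. Once this is noted, the proof becomes a controlled replay of Theorems~\ref{thm:global-bifurcation} and~\ref{thm:global-bifurcation-trivial}, with $\lambda_\pm$ substituted for $\lambda_0\pm\varepsilon$ throughout, followed by the Rabinowitz alternative.
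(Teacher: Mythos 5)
Your proposal is correct and follows the approach the paper indicates: the paper does not write out a proof but cites the analogous argument of \cite[Theorem~3.8]{GolKluSte2} together with Theorems~\ref{thm:global-bifurcation} and~\ref{thm:global-bifurcation-trivial}, which is exactly the degree-difference argument you carry out (negate alternative (i) to get $\lambda_\pm$ with well-defined degree, note that the splitting \eqref{eq:splitted} and the two nontriviality arguments transfer verbatim to $\lambda_\pm$ in place of $\lambda_0\pm\varepsilon$, then invoke the Rabinowitz-type alternative). Your remark on choosing a single $\delta$ is the only bookkeeping point, and it is handled correctly by taking the minimum and observing that $\cL_\pm|_{\bH_0^\perp}$ is an isomorphism since $\lambda_\pm\notin\Lambda$.
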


\begin{Remark}
For $\lambda_{0}\neq 0$, in the proofs of Theorems~\ref{thm:global-bifurcation} and~\ref{thm:global-bifurcation-trivial} we use that the degrees
$
\deg^{\nabla}_{T^{r}}(\nabla_{u}F(\cdot,\lambda_{0}\pm\varepsilon), B_{\delta}(\mathbb{R}^{p}))
$
are independent of the choice of level $\lambda_{0}\pm\varepsilon$. For $\lambda_{0}=0$ this invariance may fail. Following these proofs, it still follows that if
\[
\deg^{\nabla}_{T^{r}}\bigl(\nabla_{u}F(\cdot,\varepsilon), B_{\delta}(\mathbb{R}^{p})\bigr)
\neq
\deg^{\nabla}_{T^{r}}\bigl(\nabla_{u}F(\cdot,-\varepsilon), B_{\delta}(\mathbb{R}^{p})\bigr),
\]
then global bifurcation occurs from $(0,0)\in\cT$, provided the assumptions of Lemma~\ref{lem:nec} are satisfied.
If $0$ is a nondegenerate critical point of $F(\cdot,\lambda)$ for all $\lambda\neq 0$ (i.e., \textnormal{(N1)} holds), then, by the linearization property of the degree, the question reduces to whether the numbers (counting multiplicities) of negative and positive eigenvalues of $A$ have different parity, which is precisely to say that $p$ is odd.
Finally, if the necessary condition of Lemma~\ref{lem:nec} fails at $\lambda_0=0$, then bifurcation occurs as in Theorem~\ref{thm:local-or-global}: locally for $\lambda$ near $0$, or globally at some $\widehat\lambda\in(-\varepsilon,\varepsilon)$.
\end{Remark}

\subsection{Symmetry breaking}\label{subsec:symmetry-breaking}
We now discuss \textnormal{(N2)} and its implications for the symmetry of bifurcating solutions. We say that symmetry breaking occurs at a bifurcation point $(0,\lambda_{0})$ if, in some neighbourhoods $U\subset\bH$ of $0$ and $I\subset\bR$ of $\lambda_{0}$, every nontrivial solution $(u,\lambda)\in U\times I$ of \eqref{eq:bifeq} satisfies $G_{u}\neq G$, where $G_{u}:=\{g\in G:\ g\cdot u=u\}$ denotes the isotropy group of $u$.

\begin{Theorem}\label{thm:symm-selection}
Assume \textnormal{(B1)}--\textnormal{(B6)} and \textnormal{(N2)}, and let $\lambda_{0}\in\bR\setminus\{0\}$ be such that $(0,\lambda_{0})\in\cT$ is a bifurcation point of weak solutions of \eqref{eq:GH}--\eqref{eq:GH-Neumann}. Then symmetry breaking occurs at $(0,\lambda_{0})\in\cT$.
\end{Theorem}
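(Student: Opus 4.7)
The plan is to argue by contradiction: if some nontrivial solution arbitrarily close to $(0,\lambda_{0})$ satisfied $G_{u}=G$, then \textnormal{(N2)} would force such a solution to be a constant $\bR^{p}$–valued function on $M$, and \textnormal{(B6)} would rule out small nontrivial constants for $\lambda\ne 0$. The key structural observation is an identification of the $G$–fixed subspace of $\bH$.

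First I would show $\bH^{G}=\bH_{0}$. Since the $G$–action on $\bH$ is induced by the isometric $G$–action on $M$ and $-\Delta_{M}$ is $G$–equivariant by \textnormal{(B1)}, every Laplace eigenspace $\bV_{-\Delta_{M}}(\beta_{k})$ is $G$–invariant. The orthogonal Hilbert-space decomposition of $H^{1}(M)$ into these eigenspaces is therefore $G$–invariant, whence $H^{1}(M)^{G}=\overline{\bigoplus_{k}\bV_{-\Delta_{M}}(\beta_{k})^{G}}$. By \textnormal{(N2)} every summand with $\beta_{k}\ne 0$ vanishes, so $H^{1}(M)^{G}=\bV_{-\Delta_{M}}(0)$, i.e.\ the space of constant scalar functions on $M$. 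Applying this component-by-component to $\bH=\bigoplus_{i=1}^{p}H^{1}(M)$ gives $\bH^{G}=\bH_{0}$. Consequently any $u\in\bH^{G}$ corresponds to a constant $c\in\bR^{p}$ with $-\Delta_{M}u=0$, so the equation \eqref{eq:bifeq} restricted to $\bH^{G}$ collapses to the finite-dimensional equation $\nabla_{u}F(c,\lambda)=0$.

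To finish I would apply \textnormal{(B6)}. Pick an open interval $I\ni\lambda_{0}$ with $0\notin I$ and let $\delta>0$ be as in \textnormal{(B6)}. For a constant $u\in\bH_{0}$ corresponding to $c\in\bR^{p}$ one has $\|u\|_{\bH}^{2}=\mathrm{vol}(M) |c|^{2}$, so there exists $\eta>0$ such that every $u\in B_{\eta}(0,\bH)\cap\bH^{G}$ satisfies $|c|<\delta$. If some nontrivial solution $(u,\lambda)\in B_{\eta}(0,\bH)\times I$ had $G_{u}=G$, then $c\ne 0$ would lie in $B_{\delta}(\bR^{p})\cap\nabla_{u}F(\cdot,\lambda)^{-1}(0)=\{0\}$ by \textnormal{(B6)}, a contradiction. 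Therefore every nontrivial solution in $B_{\eta}(0,\bH)\times I$ satisfies $G_{u}\ne G$, which is symmetry breaking at $(0,\lambda_{0})$.

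There is no substantive obstacle; the argument is essentially structural. The only step requiring care is the identification $H^{1}(M)^{G}=\bV_{-\Delta_{M}}(0)$, which follows once one combines the $G$–invariance of the eigenspace decomposition with \textnormal{(N2)}; afterwards the uniform radius $\delta$ provided by \textnormal{(B6)} immediately precludes any small nontrivial $G$–fixed solution and thus forces the bifurcating solutions to leave the fixed-point stratum.
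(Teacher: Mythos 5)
Your proof is correct and follows the same route as the paper: use \textnormal{(N2)} to show that any $G$–fixed element of $\bH$ must lie in $\bH_{0}$ and hence be a constant function, then invoke \textnormal{(B6)} to rule out small nonzero constant solutions near $\lambda_{0}\ne 0$. You simply spell out in more detail the two inclusions that the paper asserts in one line (the identification $\bH^{G}=\bH_{0}$ via the $G$–invariant eigenspace decomposition, and the conversion of the $\bH$–smallness of $u$ into $\bR^{p}$–smallness of the corresponding constant), but the logical skeleton is identical.
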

\begin{proof}
Assume, to the contrary, that there exists a nontrivial solution $(u,\lambda)$ with $\lambda$ near $\lambda_{0}\ne 0$ and $G_{u}=G$. Then $u\in\bH^{G}\subset\bH_{0}$ by \textnormal{(N2)}, so $u$ is constant. By \textnormal{(B6)}, the only constant solution in a neighborhood of $0$ is $u=0$, a contradiction. Hence every nontrivial solution near $(0,\lambda_{0})$ has $G_{u}\ne G$, i.e., symmetry breaking occurs.
\end{proof}

Under \textnormal{(N2)}, when $\lambda_{0}=0$ any bifurcation - if it occurs - is confined to $\bH_{0}$, hence to constant solutions $u(x)\equiv u_{0}$ with $\nabla_{u}F(u_{0},\lambda)=0$. Thus no symmetry breaking can occur at $(0,0)$. Indeed, by standard equivariant Lyapunov–Schmidt arguments (cf. \cite{Dancer1979} and \cite[Lemma~3.11]{GKS2018}), any solution $(u,\lambda)$ near $(0,0)$ has the same isotropy group as some element of $\cV_{0}(0)$, and $\cV_{0}(0)\subset\bH_{0}$ consists of $G$–invariant functions.

\subsection{Elliptic systems on a symmetric space}\label{subsec:unbounded}

In this section we specialize \eqref{eq:GH} to compact Riemannian symmetric spaces.
Let $G$ be a compact, connected, semisimple Lie group  and let $H\subset G$ be
the fixed-point subgroup of an involutive automorphism of $G$. Then
$M:=G/H$ is a compact Riemannian symmetric space endowed with its canonical
$G$-invariant metric.

On such manifolds the eigenspaces of $-\Delta_{M}$ are well understood, which
allows us to verify the assumptions of Theorems \ref{thm:global-bifurcation}
and \ref{thm:symm-selection} and thereby locate global bifurcation points at
which symmetry breaking occurs. Moreover, using the algebraic description of
these eigenspaces as $G$-representations,
we refine the global bifurcation theorem to obtain unboundedness of the
bifurcating continua. To that end, let us recall the following $\bT$–equivariant
version of the Rabinowitz alternative (cf. \cite[Thm.~4.9]{Ryb2005milano}),
together with its summation formula:

\begin{Theorem}\label{thm:altRab_sum_T}
If $\bif_{\bT}(\lambda_0)\neq \Theta$ in $U(\bT)$, then a global bifurcation of
solutions to \eqref{eq:bifeq} occurs from $(0,\lambda_0)\in\cT$. Moreover, if the
continuum $\cC(\lambda_0)$ is bounded, then the set $\cC(\lambda_0)\cap \cT$
is finite and
\begin{equation}\label{eq:sub_bif_T}
\sum_{(0,\widehat{\lambda})\in \cC(\lambda_0)\cap \cT}
\bif_{\bT}(\widehat{\lambda})=\Theta \in U(\bT).
\end{equation}
\end{Theorem}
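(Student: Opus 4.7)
The first assertion is the content of Theorem~\ref{thm:bif-index-global}, itself a direct consequence of the equivariant Rabinowitz alternative \cite[Thm.~4.9]{Ryb2005milano}; thus the substance lies in the summation formula~\eqref{eq:sub_bif_T}, and my plan focuses on it.

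Assume $\cC(\lambda_0)$ is bounded. I would first upgrade boundedness to compactness in $\bH\times\bR$: since $\nabla_u\Phi(u,\lambda)=u-L_{\lambda A}u+\nabla_u\eta(u,\lambda)$ is a continuously parameterized completely continuous perturbation of the identity on $\bH$, any bounded sequence of zeros subconverges, so $\overline{\cC(\lambda_0)}$ is compact. For the finiteness of $\cC(\lambda_0)\cap\cT$, every such point is a local bifurcation point, and by \eqref{eq:spectrum} together with the discussion surrounding Lemma~\ref{lem:nec}, local bifurcation can occur only at values in the discrete set $\Lambda\cup\{0\}$; since $\sigma(A)$ is finite and $\sigma(-\Delta_M)$ is discrete with $\beta_k\to\infty$, this set has no finite accumulation points, so only finitely many bifurcation values $\lambda_1<\cdots<\lambda_N$ lie in the bounded projection of $\cC(\lambda_0)$ onto $\bR$.

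Next I would apply the classical Whyburn-type separation argument (adapted equivariantly, as in \cite{Rabinowitz1971,Ryb2005milano}) to produce an open, bounded, $\bT$-invariant set $\Omega\subset\bH\times\bR$ with $\cC(\lambda_0)\subset\Omega$ and $\partial\Omega\cap(\cN\cup\cT)=\emptyset$. Writing $\Omega_\lambda=\{u\in\bH:(u,\lambda)\in\Omega\}$, the slice degree
\[
d(\lambda):=\deg^{\nabla}_{\bT}\bigl(\nabla_u\Phi(\cdot,\lambda),\Omega_\lambda\bigr)\in U(\bT)
\]
is well defined for $\lambda\notin\{\lambda_1,\dots,\lambda_N\}$.

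Finally I would identify $d$ as a step function with jumps $\bif_{\bT}(\lambda_i)$ at each $\lambda_i$. Boundedness of $\Omega$ forces $\Omega_\lambda=\emptyset$ and hence $d(\lambda)=\Theta$ for $|\lambda|$ large; homotopy invariance of the equivariant gradient degree together with the zero-free boundary condition makes $d$ locally constant on $\bR\setminus\{\lambda_1,\dots,\lambda_N\}$. The main technical obstacle is the excision identification at each $\lambda_i$: choosing $\varepsilon,\delta>0$ small enough that $B_\delta(\bH)\subset\Omega_{\lambda_i\pm\varepsilon}$ and, by (B6), $0$ is the only zero of $\nabla_u\Phi(\cdot,\lambda_i\pm\varepsilon)$ in the closed $\delta$-ball, additivity of the degree splits
\[
d(\lambda_i\pm\varepsilon)=\deg^{\nabla}_{\bT}\bigl(\nabla_u\Phi(\cdot,\lambda_i\pm\varepsilon),B_\delta(\bH)\bigr)+r^{\pm}(\lambda_i),
\]
where $r^{\pm}(\lambda_i)$ accounts for the nontrivial zeros in $\Omega_{\lambda_i\pm\varepsilon}\setminus B_\delta(\bH)$. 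The key point is the equality $r^{+}(\lambda_i)=r^{-}(\lambda_i)$, to be obtained by a local homotopy across $\lambda=\lambda_i$ on the set $(\Omega\setminus(B_\delta(\bH)\times\{\lambda_i\}))$, where no further collision with the trivial branch occurs and the boundary remains zero-free. Subtracting gives $d(\lambda_i+\varepsilon)-d(\lambda_i-\varepsilon)=\bif_{\bT}(\lambda_i)$, and telescoping from $-\infty$ to $+\infty$ using $d(\pm\infty)=\Theta$ yields $\sum_{i=1}^{N}\bif_{\bT}(\lambda_i)=\Theta$.
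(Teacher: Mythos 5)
The paper does not prove Theorem~\ref{thm:altRab_sum_T} itself: it is presented as a recollection, cited from the literature (the introductory sentence reads ``let us recall the following $\bT$--equivariant version of the Rabinowitz alternative (cf.~\cite[Thm.~4.9]{Ryb2005milano})''). There is therefore no proof in the paper to compare against; your write-up is a reconstruction of the classical argument underlying the cited result. The overall route (compactness of $\cC(\lambda_0)$ via complete continuity, finiteness of $\cC(\lambda_0)\cap\cT$ from discreteness of $\Lambda$, Whyburn separation, a slice degree $d(\lambda)$ that is locally constant with jumps equal to the bifurcation indices, and telescoping using $d(\pm\infty)=\Theta$) is indeed how such summation formulas are obtained, so the shape of your argument is correct.

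One step is stated imprecisely, though. The claim that one can construct a bounded, invariant $\Omega$ with $\partial\Omega\cap(\cN\cup\cT)=\emptyset$ cannot hold literally: $\cT=\{0\}\times\bR$ is unbounded, so any bounded $\Omega$ containing $(0,\lambda_0)\in\cT$ necessarily has $\partial\Omega\cap\cT\neq\emptyset$. What the separation argument actually yields is a bounded open invariant $\Omega$ such that $\partial\Omega$ contains no \emph{nontrivial} solutions and $\Omega\cap\cT$ is a union of finitely many open intervals $\{0\}\times(a_i,b_i)$, whose endpoints are not bifurcation values. The slice degree $d(\lambda)$ is then well defined precisely for those $\lambda$ with $(0,\lambda)\notin\partial\Omega$ and $\lambda\notin\{\lambda_1,\dots,\lambda_N\}$; it vanishes for $\lambda$ with $(0,\lambda)\notin\overline\Omega$ and $\Omega_\lambda=\emptyset$, and is locally constant by homotopy invariance. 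With that correction the telescoping goes through as you describe. A second point worth spelling out, though it is standard, is why $r^{+}(\lambda_i)=r^{-}(\lambda_i)$: the set $\overline{\Omega}\cap(\bH\times[\lambda_i-\varepsilon,\lambda_i+\varepsilon])\setminus(B_{\delta/2}(\bH)\times\bR)$ is a compact $\lambda$-parameterized admissible homotopy domain on whose lateral boundary $\nabla_u\Phi$ has no zeros (the trivial branch is excised, nontrivial solutions stay off $\partial\Omega$), and the generalized homotopy invariance of $\deg^{\nabla}_{\bT}$ then gives the equality.
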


In particular, if the individual indices $\bif_{\bT}(\widehat{\lambda})$
are known, \eqref{eq:sub_bif_T} can be used to deduce that $\cC(\lambda_0)$
is unbounded whenever the sum in \eqref{eq:sub_bif_T} cannot vanish.

We now record the properties of the Laplace--Beltrami eigenspaces which will be used
to compute these indices. The precise torus-equivariant properties needed below
are collected in \cite[Section~2.2]{Stefaniak2025}.

\begin{Lemma}\label{lem:eigenspaces-GH}
Let $M=G/H$ be a compact Riemannian symmetric space. Then:
\begin{enumerate}
\item For each $\beta\in\sigma(-\Delta_{M})\setminus\{0\}$, the eigenspace
$\bV_{-\Delta_{M}}(\beta)$ is a finite-dimensional, nontrivial $G$-representation.
\item $\bV_{-\Delta_{M}}(0)^G=\bV_{-\Delta_{M}}(0)$ and
$\bV_{-\Delta_{M}}(\beta)^G=\{0\}$ for $\beta>0$.
\item For every $\beta_k\in\sigma(-\Delta_M)$ there exists $\nu_k\in\mathbb{Z}^l$
such that $\bR[1,\nu_k]\subset\bV_{-\Delta_M}(\beta_k)$ while
$\bR[1,\nu_k]\not\subset\bV_{-\Delta_M}(\beta_m)$ for any
$\beta_m\in\sigma(-\Delta_M)$ with $\beta_m<\beta_k$.
\end{enumerate}
\end{Lemma}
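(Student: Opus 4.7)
The lemma splits cleanly: parts (1)--(2) are short, and the substantive content is in (3).

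\textbf{Parts (1) and (2).} Finite-dimensionality of each $\bV_{-\Delta_M}(\beta)$ is the standard spectral property of an elliptic operator on a compact Riemannian manifold (with Neumann boundary when applicable); irreducibility as a $G$-representation is the fact cited in the paragraph preceding the lemma; nontriviality for $\beta>0$ follows from (2). For (2), integration by parts gives $\int_M |\nabla u|^{2}\,dM=0$ for $u\in\ker(-\Delta_M)$, so such $u$ is constant and in particular $G$-invariant, i.e.\ $\bV_{-\Delta_M}(0)^G=\bV_{-\Delta_M}(0)$. Conversely, a $G$-invariant function on the homogeneous space $G/H$ must be constant (by transitivity of the $G$-action), and constants are eigenfunctions only at $\beta=0$, giving $\bV_{-\Delta_M}(\beta)^G=\{0\}$ for $\beta>0$.

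\textbf{Part (3), setup.} The case $\beta_k=0$ is trivial: take $\nu_k=0$, since $\bV_{-\Delta_M}(0)=\bR=\bR[1,0]$, and the exclusion condition is vacuous. For $\beta_k>0$ I would pass to the complexification $\bV_{-\Delta_M}(\beta_k)\otimes\bC$, fix a positive Weyl chamber for $T^l$, and let $\mu_k\in\bZ^l$ be the highest weight of an irreducible summand, identifying the weight lattice of $T^l$ with $\bZ^l$ compatibly with the characters $\phi\mapsto e^{i\langle m,\phi\rangle}$ defining the reps $\bR[1,m]$. By (2) the representation is nontrivial, so $\mu_k\ne 0$; the real 2-dimensional subspace carrying the character pair $\{\mu_k,-\mu_k\}$ then gives a copy of $\bR[1,\mu_k]$ inside the real eigenspace. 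Set $\nu_k:=\mu_k$.

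\textbf{Part (3), exclusion of lower eigenspaces.} Assume for contradiction that $\bR[1,\nu_k]\subset \bV_{-\Delta_M}(\beta_m)$ for some $\beta_m<\beta_k$. Then $\nu_k$ occurs as a $T^l$-weight of the irreducible $G$-representation $\bV_{-\Delta_M}(\beta_m)$, and highest weight theory forces its highest weight $\mu_m$ to satisfy $\mu_m-\nu_k\ge 0$ in the dominance order. On a compact Riemannian symmetric space $G/H$ with its canonical metric the Laplace--Beltrami operator acts on the $\mu$-isotypic component of $L^{2}(M)$ as the Casimir scalar
\[
C(\mu)=\langle\mu+\rho,\mu+\rho\rangle-\langle\rho,\rho\rangle,
\]
where $\rho$ is half the sum of positive roots. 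The elementary identity
\[
C(\mu_m)-C(\nu_k)=\langle\mu_m-\nu_k,\ \mu_m+\nu_k+2\rho\rangle
\]
forces $C(\mu_m)>C(\nu_k)$ whenever $\mu_m\ne\nu_k$, since $\mu_m+\nu_k+2\rho$ is strictly dominant and pairs positively with every nonzero nonnegative integer combination of simple roots. Hence either $\mu_m\ne\nu_k$ and $\beta_m=C(\mu_m)>C(\nu_k)=\beta_k$, contradicting $\beta_m<\beta_k$; or $\mu_m=\nu_k$, in which case $\bV_{-\Delta_M}(\beta_m)$ and $\bV_{-\Delta_M}(\beta_k)$ share a highest weight and hence are equivalent, contradicting the nonequivalence of eigenspaces for distinct eigenvalues.

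\textbf{Main obstacle.} The Casimir arithmetic and dominance monotonicity are routine; the one point that requires care is the real-versus-complex bookkeeping: identifying the complex highest weight $\mu_k$ with an element $\nu_k\in\bZ^l$ whose associated real torus summand $\bR[1,\nu_k]$ (in the paper's sense) genuinely sits inside the real eigenspace. I would handle this once by fixing the identification of the weight lattice of $T^l$ with $\bZ^l$ compatible with the paper's characters and then invoking the standard fact that each nonzero complex weight pair $\{\mu,-\mu\}$ contributes exactly one real summand $\bR[1,\mu]$ to the underlying real representation.
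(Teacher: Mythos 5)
The paper gives no proof of this lemma; it simply cites \cite[Theorem~5.7.2]{Gurarie} for irreducibility and nonequivalence of eigenspaces and \cite[Proposition~2.6]{Stefaniak2025} for the claim that $\nu_k$ can be taken to be the highest weight. Your argument reconstructs what those citations encode, and it is correct: parts (1)--(2) are the standard spectral/transitivity facts, and part (3) is precisely the highest-weight choice together with Casimir monotonicity $C(\mu_m)-C(\nu_k)=\langle\mu_m-\nu_k,\mu_m+\nu_k+2\rho\rangle>0$ when $\mu_m>\nu_k$ in the dominance order, combined with the identification of $-\Delta_M$ on $G/H$ with the Casimir action on isotypic components. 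This is the same mechanism underlying the cited \cite[Proposition~2.6]{Stefaniak2025}, not a genuinely different route. Two small remarks worth recording: the fact that $-\Delta_M$ acts by the Casimir scalar on each spherical isotypic component depends on $G$ being compact connected semisimple and the metric being the canonical one induced by the Killing form, which is exactly the standing assumption in Subsection~\ref{subsec:unbounded}, so it is available here; and the real-vs-complex bookkeeping you flag at the end is indeed the one point requiring care -- your resolution (each nonzero conjugate pair $\{\mu,-\mu\}$ of weights contributes a real $\bR[1,\mu]$ summand, and $\bR[1,\mu]\approx_{T^l}\bR[1,-\mu]$) is the correct way to close it.
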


As a consequence of Lemma~\ref{lem:eigenspaces-GH} together with
Theorems~\ref{thm:global-bifurcation} and~\ref{thm:symm-selection} we obtain:

\begin{Theorem}\label{thm:global-bifurcation-symmetric}
Assume \textnormal{(B1)}--\textnormal{(B6)} and let $M=G/H$ be a symmetric space with $G$ connected. Fix $\beta_{k_{0}}\in\sigma(-\Delta_{M})\setminus\{0\}$ and $\alpha_{j_{0}}\in\sigma(A)\setminus\{0\}$, and set $\lambda_{0}:=\beta_{k_{0}}/\alpha_{j_{0}}\in\Lambda$.
Then $(0,\lambda_{0})\in\cT$ is a global bifurcation point at which symmetry breaking occurs.
\end{Theorem}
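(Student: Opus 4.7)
\medskip

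\noindent\textbf{Proof proposal.} The plan is to verify directly the hypotheses of Theorem~\ref{thm:global-bifurcation} and Theorem~\ref{thm:symm-selection} using the structural information about Laplace--Beltrami eigenspaces on $M=G/H$ recorded in Lemma~\ref{lem:eigenspaces-GH}. Since (B1)--(B6) are already assumed, only two things need to be checked: (i)~the symmetry hypothesis (N2); and (ii)~for the particular $\beta_{k_0}\neq 0$, that $\bV_{-\Delta_M}(\beta_{k_0})$ is a nontrivial $T^{l}$-representation, where $T^{l}\subset G$ is the fixed maximal torus.

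For (i), observe that Lemma~\ref{lem:eigenspaces-GH}(2) asserts $\bV_{-\Delta_M}(\beta)^{G}=\{0\}$ for every positive eigenvalue $\beta$. This is exactly condition (N2): the constants exhaust the $G$-invariants of $-\Delta_M$. For (ii), Lemma~\ref{lem:eigenspaces-GH}(1) tells us that $\bV_{-\Delta_M}(\beta_{k_0})$ is a nontrivial irreducible $G$-representation. Because $G$ is connected, the remark following Theorem~\ref{thm:global-bifurcation} applies verbatim (it is a direct consequence of \cite[Lemma~3.3]{GarRyb}) and yields nontriviality of $\bV_{-\Delta_M}(\beta_{k_0})$ as a $T^{l}$-representation. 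Alternatively, one can invoke Lemma~\ref{lem:eigenspaces-GH}(3) to extract an explicit highest weight $\nu_{k_0}\neq 0$ and a copy $\bR[1,\nu_{k_0}]\subset\bV_{-\Delta_M}(\beta_{k_0})$, which again witnesses nontriviality over $T^{l}$.

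With (N2) verified and the $T^{l}$-nontriviality of $\bV_{-\Delta_M}(\beta_{k_0})$ in hand, Theorem~\ref{thm:global-bifurcation} applies to $\lambda_0=\beta_{k_0}/\alpha_{j_0}$ and shows that $(0,\lambda_0)$ is a global bifurcation point. Symmetry breaking at $(0,\lambda_0)$ then follows at once from Theorem~\ref{thm:symm-selection}, since $\lambda_0\neq 0$ (as $\beta_{k_0}\neq 0$ and $\alpha_{j_0}\neq 0$) and (N2) holds.

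No step poses a serious obstacle: the content of the theorem is essentially a clean application of previously established results, with Lemma~\ref{lem:eigenspaces-GH} supplying exactly the representation-theoretic input needed to feed Theorems~\ref{thm:global-bifurcation} and~\ref{thm:symm-selection}. The only point that requires a small remark is the transfer of nontriviality from the $G$-action to the maximal torus $T^{l}$, which is where connectedness of $G$ is used.
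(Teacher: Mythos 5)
Your proof is correct and follows essentially the same route as the paper: the stated theorem is presented there as a direct consequence of Lemma~\ref{lem:eigenspaces-GH} combined with Theorems~\ref{thm:global-bifurcation} and~\ref{thm:symm-selection}, which is precisely what you spell out. Your verification of (N2) via Lemma~\ref{lem:eigenspaces-GH}(2), the $T^{l}$-nontriviality via Lemma~\ref{lem:eigenspaces-GH}(1) and connectedness of $G$ (or, alternatively, the nonzero highest weight from Lemma~\ref{lem:eigenspaces-GH}(3)), and the observation that $\lambda_0\neq 0$ for Theorem~\ref{thm:symm-selection} all match the intended argument.
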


We next ask when the continua from Theorem~\ref{thm:global-bifurcation-symmetric} are unbounded. For this we assume the following property of the eigenspaces of $A$:
\begin{enumerate}
\item[(E)] For every $\alpha_j\in\sigma(A)$ there exists $\mu_j\in\mathbb{Z}^r$ such that
$\bR[1,\mu_j]\subset\bV_A(\alpha_j)$ and
$\bR[1,\mu_j]\not\subset\bV_A(\alpha_i)$ for any $\alpha_i\in\sigma(A)$ with $\alpha_i\ne\alpha_j$.
\end{enumerate}

\begin{Lemma}\label{lem:highest}
Let $M$ be a compact symmetric space. Suppose that the eigenvalues of $A$ satisfy \textnormal{(E)}.
Fix $\lambda_0=\beta_{k_0}/\alpha_{j_0}\in\Lambda\setminus\{0\}$. Then:
\begin{enumerate}
\item If $\mu_{j_0}\neq 0$, then
$
\bR[1,(\mu_{j_0},\nu_{k_0})]\oplus \bR[1,(\mu_{j_0},-\nu_{k_0})]\subset \cV(\lambda_0).
$
\item If $\mu_{j_0}=0$, then
$
\bR[1,(0,\nu_{k_0})]\subset \cV(\lambda_0).
$
\end{enumerate}
Moreover, if  $\lambda_0>0$, then for every $\lambda<\lambda_0$,
$
\bR[1,(\mu_{j_0},\pm\nu_{k_0})]\not\subset \cV(\lambda),$
while if $\lambda_0<0$, the same conclusion holds for every $\lambda>\lambda_0$.
\end{Lemma}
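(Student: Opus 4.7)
The plan is to obtain parts (1) and (2) as a direct application of the tensor-product decomposition in Lemma~\ref{lem:T-product-decomp}, and to prove the ``moreover'' statement by contradiction, combining condition \textnormal{(E)} on the $A$-side with the uniqueness statement in Lemma~\ref{lem:eigenspaces-GH}(3) on the Laplace-Beltrami side.

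First I would observe that $\nu_{k_0}\neq 0$: since $\lambda_0\neq 0$ and $\alpha_{j_0}\neq 0$, we have $\beta_{k_0}\neq 0$, so by Lemma~\ref{lem:eigenspaces-GH}(2) the space $\bV_{-\Delta_M}(\beta_{k_0})$ contains no $G$-invariant vector and hence, as $G$ is connected, no trivial $T^l$-subrepresentation. By \eqref{eq:cvlambda}, $\bV_A(\alpha_{j_0})\otimes \bV_{-\Delta_M}(\beta_{k_0})$ is a $\bT$-summand of $\cV(\lambda_0)$, and $\bR[1,\mu_{j_0}]\otimes \bR[1,\nu_{k_0}]$ sits inside it. Feeding this into Lemma~\ref{lem:T-product-decomp} yields exactly the two cases claimed: $\bR[1,(\mu_{j_0},\nu_{k_0})]\oplus \bR[1,(\mu_{j_0},-\nu_{k_0})]$ when $\mu_{j_0}\neq 0$, and $\bR[1,(0,\nu_{k_0})]$ when $\mu_{j_0}=0$.

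For the ``moreover'' part I would consider $\lambda_0>0$ (the case $\lambda_0<0$ is analogous by symmetry). From $\beta_{k_0}=\lambda_0\alpha_{j_0}>0$ we obtain $\alpha_{j_0}>0$. Suppose, for contradiction, that $\bR[1,(\mu_{j_0},\pm\nu_{k_0})]\subset \cV(\lambda)$ for some $\lambda<\lambda_0$. By \eqref{eq:cvlambda} this subrepresentation must lie inside some summand $\bV_A(\alpha_j)\otimes \bV_{-\Delta_M}(\beta_k)$ with $\beta_k=\lambda\alpha_j>0$. Using Lemma~\ref{lem:T-product-decomp} together with the equivalence $\bR[1,m]\approx \bR[1,m']\iff m'=\pm m$, and the fact that $\nu_{k_0}\neq 0$, I would deduce that this forces $\bR[1,\mu_{j_0}]\subset \bV_A(\alpha_j)$ and $\bR[1,\nu_{k_0}]\subset \bV_{-\Delta_M}(\beta_k)$. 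Condition \textnormal{(E)} then gives $\alpha_j=\alpha_{j_0}$, whence $\beta_k=\lambda\alpha_{j_0}$; since $\beta_k>0$ and $\alpha_{j_0}>0$, we must have $\lambda>0$, and then $\lambda<\lambda_0$ gives $\beta_k<\beta_{k_0}$, contradicting Lemma~\ref{lem:eigenspaces-GH}(3).

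The main obstacle I anticipate is the case analysis inside Lemma~\ref{lem:T-product-decomp}: one must check, for each of its four cases, that whenever $\bR[1,(\mu_{j_0},\pm\nu_{k_0})]$ with $\nu_{k_0}\neq 0$ occurs as a summand of a product $\bR[1,m_a]\otimes \bR[1,n_b]$, the weights $m_a$ and $n_b$ must coincide with $\mu_{j_0}$ and $\nu_{k_0}$ up to sign. The assumption $\nu_{k_0}\neq 0$ excludes the degenerate case $m_a\neq 0,\ n_b=0$ that would otherwise allow the summand to arise from an unrelated factor, and this is precisely what makes the clean separation into an $A$-side statement (handled by \textnormal{(E)}) and a Laplacian-side statement (handled by Lemma~\ref{lem:eigenspaces-GH}(3)) possible.
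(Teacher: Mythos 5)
Your proposal follows essentially the same route as the paper: parts (1) and (2) come directly from Lemma~\ref{lem:T-product-decomp} applied to the summand $\bV_A(\alpha_{j_0})\otimes\bV_{-\Delta_M}(\beta_{k_0})$ of $\cV(\lambda_0)$, and the ``moreover'' clause is proved by contradiction, tracing any occurrence of $\bR[1,(\mu_{j_0},\pm\nu_{k_0})]$ back to a tensor summand $\bV_A(\alpha_j)\otimes\bV_{-\Delta_M}(\beta_k)$, then pinning down $j=j_0$ via \textnormal{(E)} and $\beta_k\ge\beta_{k_0}$ via Lemma~\ref{lem:eigenspaces-GH}(3). Your closing observation about why $\nu_{k_0}\neq 0$ excludes the degenerate case $n_b=0$ in the tensor decomposition is exactly the point that makes the argument work.

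There is, however, one flawed step. To establish $\nu_{k_0}\neq 0$, you argue that $\bV_{-\Delta_M}(\beta_{k_0})^G=\{0\}$ together with connectedness of $G$ implies that $\bV_{-\Delta_M}(\beta_{k_0})$ has \emph{no} trivial $T^l$-subrepresentation. This implication is false: a nontrivial irreducible $G$-representation can very well have nonzero $T^l$-fixed vectors (zero-weight space). For instance, on $S^2=\mathrm{SO}(3)/\mathrm{SO}(2)$ the eigenspace of degree-one spherical harmonics is nontrivial and irreducible for $\mathrm{SO}(3)$, yet under the maximal torus $\mathrm{SO}(2)$ it decomposes as $\bR[1,0]\oplus\bR[1,1]$, containing a trivial summand (the zonal harmonic). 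The fact you actually need, $\nu_{k_0}\neq 0$, is still true but for a different reason: $\nu_{k_0}$ is the highest weight of the nontrivial irreducible $G$-representation $\bV_{-\Delta_M}(\beta_{k_0})$, and the trivial representation is the unique one with highest weight $0$. Alternatively, one can read $\nu_{k_0}\neq 0$ off the characterizing property in Lemma~\ref{lem:eigenspaces-GH}(3) itself: if $\nu_{k_0}=0$, then $\bR[1,0]\subset\bV_{-\Delta_M}(0)$ with $0<\beta_{k_0}$ would violate the requirement that $\bR[1,\nu_{k_0}]\not\subset\bV_{-\Delta_M}(\beta_m)$ for $\beta_m<\beta_{k_0}$. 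With this repaired, the rest of your argument goes through.
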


\begin{proof}
Fix $\lambda_0=\beta_{k_0}/\alpha_{j_0}\in\Lambda\setminus\{0\}$. In particular, $\beta_{k_0}\neq 0$.
By (E) and Lemma~\ref{lem:eigenspaces-GH}(3) there exist
$\mu_{j_0}\in\mathbb{Z}^r$ and $\nu_{k_0}\in\mathbb{Z}^l\setminus\{0\}$ with
$
\bR[1,\mu_{j_0}]\subset \bV_A(\alpha_{j_0})$ and
$\bR[1,\nu_{k_0}]\subset \bV_{-\Delta_M}(\beta_{k_0}),
$
such that $\bR[1,\mu_{j_0}]$ does not occur in $\bV_A(\alpha_j)$ for $j\ne j_0$ and
$\bR[1,\nu_{k_0}]$ does not occur in $\bV_{-\Delta_M}(\beta_k)$ for $\beta_k<\beta_{k_0}$.
Using the definition of $\cV(\lambda_0)$  and
Lemma~\ref{lem:T-product-decomp}, we obtain
\[
\bR[1,(\mu_{j_0},\nu_{k_0})]\oplus \bR[1,(\mu_{j_0},-\nu_{k_0})]\subset
\bV_A(\alpha_{j_0})\otimes \bV_{-\Delta_M}(\beta_{k_0})
\subset \cV(\lambda_0)
\]
if $\mu_{j_0}\neq 0$, while, if $\mu_{j_0}=0$, then
$
\bR[1,(0,\nu_{k_0})]\subset \cV(\lambda_0).
$

To finish the proof, suppose that $\lambda\in\Lambda$ and $\bR[1,(\mu_{j_0},\nu_{k_0})]\subset \cV(\lambda)$ or $\bR[1,(\mu_{j_0},-\nu_{k_0})]\subset \cV(\lambda)$. Then, by Lemma~\ref{lem:T-product-decomp}, $\bR[1,(\mu_{j_0},\pm\nu_{k_0})]$ must come from a tensor summand $\bV_A(\alpha_j)\otimes \bV_{-\Delta_M}(\beta_k)$ with $\beta_k=\lambda\alpha_j$ and with $\bR[1,\mu_{j_0}]\subset \bV_A(\alpha_j)$ and $\bR[1,\pm\nu_{k_0}]\subset \bV_{-\Delta_M}(\beta_k)$. Since $\bR[1,\nu_{k_0}]\approx_{T^l}\bR[1,-\nu_{k_0}]$, the latter is equivalent to $\bR[1,\nu_{k_0}]\subset \bV_{-\Delta_M}(\beta_k)$. Hence $\beta_k\ge \beta_{k_0}$ and, by (E), $j=j_0$. Therefore, if $\lambda_0>0$ (so $\alpha_{j_0}>0$), then $\lambda=\beta_k/\alpha_{j_0}\ge \beta_{k_0}/\alpha_{j_0}=\lambda_0$, and if $\lambda_0<0$, then $\lambda=\beta_k/\alpha_{j_0}\le \beta_{k_0}/\alpha_{j_0}=\lambda_0$. This proves the claim.
\end{proof}

Combining Lemma~\ref{lem:highest} with the $\bT$–equivariant Rabinowitz alternative and its summation formula (Theorem~\ref{thm:altRab_sum_T}) shows that the bifurcation indices along $\cT$ do not cancel, and hence the continua through the bifurcation points cannot be bounded. Let us make this precise in the following theorem.

\begin{Theorem}\label{thm:unbounded}
Let $M$ be a compact symmetric space. Assume \textnormal{(B1)}--\textnormal{(B6)} and \textnormal{(E)}. Suppose that $0$ is nondegenerate. Then for every $\lambda_{0}\in\Lambda\setminus\{0\}$ the point $(0,\lambda_{0})\in\cT$ is a global bifurcation point and the continuum $\mathcal{C}(\lambda_0)$ is unbounded in $\bH\times\mathbb{R}$. If, additionally, $p$ is odd, then the continuum $\mathcal{C}(0)$ is also unbounded in $\bH\times\mathbb{R}$.
\end{Theorem}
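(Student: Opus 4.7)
By Theorem~\ref{thm:global-bifurcation-symmetric}, for every $\lambda_{0}\in\Lambda\setminus\{0\}$ the point $(0,\lambda_{0})$ is already a global bifurcation point, so the heart of the theorem is the unboundedness claim. I plan to argue by contradiction via the $\bT$-equivariant Rabinowitz alternative (Theorem~\ref{thm:altRab_sum_T}): if $\cC(\lambda_{0})$ is bounded, then $\cC(\lambda_{0})\cap\cT=\{(0,\lambda^{(1)}),\dots,(0,\lambda^{(N)})\}$ is finite and $\sum_{i}\bif_{\bT}(\lambda^{(i)})=\Theta$ in $U(\bT)$. The contradiction will come from a ``marker'' codimension-$1$ subgroup $H_{(\mu^{*},\nu^{*})}$ whose coefficient in this sum I will show is nonzero.

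Take $\lambda_{0}>0$ (the case $\lambda_{0}<0$ is symmetric) and set $\lambda^{*}:=\max\{\lambda^{(i)}>0\}$, which is at least $\lambda_{0}$. Writing $\lambda^{*}=\beta_{k^{*}}/\alpha_{j^{*}}$ forces $\alpha_{j^{*}}>0$; by hypothesis \textnormal{(E)} and Lemma~\ref{lem:eigenspaces-GH}(3) we obtain $\mu^{*}:=\mu_{j^{*}}\in\bZ^{r}$ and $\nu^{*}:=\nu_{k^{*}}\in\bZ^{l}\setminus\{0\}$, and Lemma~\ref{lem:highest} gives $\bR[1,(\mu^{*},\pm\nu^{*})]\subset\cV(\lambda^{*})$ with some multiplicity $k^{*}\ge 1$. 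The proof of that lemma moreover shows that $\bR[1,(\mu^{*},\pm\nu^{*})]\subset\cV(\lambda)$ only when $\alpha_{j}=\alpha_{j^{*}}$ and $\beta_{k}\ge\beta_{k^{*}}$; combined with $\alpha_{j^{*}}>0$, this excludes every $\lambda<\lambda^{*}$, including every $\lambda\le 0$. This is the marker property I will exploit.

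By \eqref{eq:bifind-final} each bifurcation index factors as
\[
\bif_{\bT}(\lambda^{(i)})=\cD_{A}^{(i)}\star\cE^{(i)}\star\bigl(\deg^{\nabla}_{\bT}(-\mathrm{Id},B_{\delta}(\cV(\lambda^{(i)})))-\bI\bigr),
\]
where $\cD_{A}^{(i)}$ is the degree on $\bH_{0}$ and $\cE^{(i)}=\deg^{\nabla}_{\bT}(-\mathrm{Id},B_{\delta}(\cW(\lambda^{(i)}-\varepsilon)))$ (with the analogous convention when $\lambda^{(i)}<0$). By Remark~\ref{rem:lift-degree-to-big-torus} and linearization using the nondegeneracy of $A$, $\cD_{A}^{(i)}$ is a $\bZ$-combination of $\chi_{\bT}(\bT/(H\times T^{l})^{+})$ whose $\bI$-coefficient equals $\pm 1$ by Theorem~\ref{thm:UTnmult}; since $\nu^{*}\ne 0$, the subgroup $H_{(\mu^{*},\nu^{*})}$ is not of the form $H\times T^{l}$, so $\cD_{A}^{(i)}$ itself contributes no marker term. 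Theorem~\ref{thm:UTnmult} says the marker appears in $\cE^{(i)}$ (respectively in the third factor) only when $\bR[1,(\mu^{*},\pm\nu^{*})]\subset\cW(\lambda^{(i)}-\varepsilon)$ (respectively $\subset\cV(\lambda^{(i)})$); the marker property excludes the former for every $i$ and forces the latter only at $\lambda^{(i)}=\lambda^{*}$. Corollary~\ref{cor:bigcodim}, Lemma~\ref{lem:dim}, and the product rule \eqref{eq:UTmultiplication} rule out indirect cross-term contributions: the product $\chi_{\bT}(\bT/(H\times T^{l})^{+})\star\chi_{\bT}(\bT/H_{(m,n)}^{+})$ with $n\ne 0$ has codimension $\codim H+1$, which equals $1$ only for $H=T^{r}$, and in that case the product is just $\chi_{\bT}(\bT/H_{(m,n)}^{+})$ itself. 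Collecting, the coefficient of $\chi_{\bT}(\bT/H_{(\mu^{*},\nu^{*})}^{+})$ in $\sum_{i}\bif_{\bT}(\lambda^{(i)})$ equals $\pm k^{*}\ne 0$, contradicting the vanishing of the sum.

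For $\lambda_{0}=0$ with $p$ odd, one has $\cV(0)=\{0\}$ and $\cW(\pm\varepsilon)=\{0\}$ for small $\varepsilon$, so $\bif_{\bT}(0)=\deg^{\nabla}_{T^{r}}(-A,B_{\delta}(\bR^{p}))-\deg^{\nabla}_{T^{r}}(A,B_{\delta}(\bR^{p}))$; by linearization and Theorem~\ref{thm:UTnmult}, its $\bI$-coefficient equals $(-1)^{p-\morse(A)}-(-1)^{\morse(A)}=\pm 2\ne 0$, so $\bif_{\bT}(0)\ne\Theta$ and Theorem~\ref{thm:bif-index-global} yields the global bifurcation at $(0,0)$. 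Unboundedness of $\cC(0)$ follows by the same marker argument: either $\cC(0)\cap\cT=\{(0,0)\}$ and the sum reduces to $\bif_{\bT}(0)\ne\Theta$, or one picks an extremal $\lambda^{*}\ne 0$ in $\cC(0)\cap\cT$ and repeats the analysis, observing that $\bif_{\bT}(0)$ contributes only $\chi_{\bT}(\bT/(H\times T^{l})^{+})$-type terms and hence cannot cancel the marker. The main obstacle throughout is the Euler-ring bookkeeping — verifying via Corollary~\ref{cor:bigcodim}, Lemma~\ref{lem:dim}, and \eqref{eq:UTmultiplication} that no lower-codimension cross-term conspires to recreate or cancel the marker $\chi_{\bT}(\bT/H_{(\mu^{*},\nu^{*})}^{+})$.
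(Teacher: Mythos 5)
Your proof is correct and follows essentially the same strategy as the paper: invoke Theorem~\ref{thm:global-bifurcation-symmetric} for the bifurcation, then argue by contradiction through the summation formula of the $\bT$-equivariant Rabinowitz alternative, using the ``highest weight'' marker $H_{(\mu,\nu)}$ from Lemma~\ref{lem:highest} (applied at the extremal level in $\cC\cap\cT$) to show the sum has a nonzero codimension-one coefficient. The paper phrases this by assuming WLOG that $\lambda_0$ itself is the maximum of $\{\widehat\lambda:(0,\widehat\lambda)\in\cC(\lambda_0)\cap\cT\}$, whereas you introduce a separate $\lambda^*$, but this is cosmetic. Two small points: you overload $k^*$ as both the eigenvalue index in $\beta_{k^*}$ and the multiplicity of the marker; and in the $\lambda_0=0$ computation you have the sign of the difference reversed (it should be $\deg^{\nabla}_{T^r}(A,\cdot)-\deg^{\nabla}_{T^r}(-A,\cdot)$ since $\nabla^2_u F(0,\pm\varepsilon)=\pm\varepsilon A$), though the parity conclusion is unaffected. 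For the unboundedness of $\cC(0)$, the paper's route is slightly slicker than yours: if $\cC(0)$ is bounded, then since $\bif_{\bT}(0)\ne\Theta$ it must meet $(0,\lambda_1)$ for some $\lambda_1\ne 0$, whence $\cC(0)=\cC(\lambda_1)$ which was already shown unbounded. Your repetition of the marker argument also works but is redundant given what is already proved.
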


\begin{proof}
Throughout the proof we use the notation of Theorem~\ref{thm:global-bifurcation}.
Fix $\lambda_{0}=\beta_{k_0}/\alpha_{j_0}\neq 0$. Without loss of generality assume $\lambda_0>0$; the case $\lambda_0<0$ is completely analogous. By Theorem~\ref{thm:global-bifurcation-symmetric}, $(0,\lambda_0)\in\cT$ is a global bifurcation point. We will show that the continuum $\mathcal{C}(\lambda_0)$ is unbounded.

Assume, to the contrary, that $\mathcal{C}(\lambda_0)$ is bounded. Then, by Theorem~\ref{thm:altRab_sum_T}, the set $\mathcal{C}(\lambda_0)\cap \mathcal{T}$ is finite and
\begin{equation}\label{eq:sub_bif_T_reorg}
\sum_{(0,\widehat{\lambda})\in \mathcal{C}(\lambda_0)\cap \mathcal{T}}
\bif_{\bT}(\widehat{\lambda})=\Theta\in U(\bT).
\end{equation}
Without loss of generality, assume $\lambda_0=\max\{\widehat{\lambda}:(0,\widehat{\lambda})\in \mathcal{C}(\lambda_0)\cap \mathcal{T}\}$.

Fix $\mu_{j_0}\in\bZ^r$ and $\nu_{k_0}\in\bZ^l\setminus \{0\}$ as in (E) and Lemma~\ref{lem:eigenspaces-GH}(3).
By Lemma~\ref{lem:highest}, $\bR[1,(\mu_{j_0},\nu_{k_0})]\subset \cV(\lambda_0)$ and $\bR[1,(\mu_{j_0},\nu_{k_0})]\not\subset \cV(\lambda)$ for all $\lambda<\lambda_0$.
Set $H_{*}:=H_{(\mu_{j_0},\nu_{k_0})}$.
We are going to show that the coefficient of $\chi_{\bT}(\bT/H_{*}^{+})$ in \eqref{eq:sub_bif_T_reorg} is nonzero.

Recall that (see \eqref{eq:bifind-final})
\begin{equation*}
\begin{aligned}
\bif_{\bT}(\lambda_{0})=\ &
\deg^{\nabla}_{\bT}\bigl((\nabla\Phi_{\pm})|_{\bH_0},B_{\delta}(\bH_0)\bigr)\star
\deg^{\nabla}_{\bT}\bigl(-\mathrm{Id},B_{\delta}(\cW(\lambda_{0}-\varepsilon))\bigr)
\\
&\star\bigl(\deg^{\nabla}_{\bT}\bigl(-\mathrm{Id},B_{\delta}(\cV(\lambda_{0}))\bigr)-\bI\bigr).
\end{aligned}
\end{equation*}
By \eqref{eq:cW-piecewise}, since $\lambda_0>0$,
$\cW(\lambda_0-\varepsilon)=\bigoplus_{\lambda\in \Lambda\cap(0,\lambda_{0})} \cV(\lambda).$
Thus $\bR[1,(\mu_{j_0},\nu_{k_0})]\not\subset \cW(\lambda_0-\varepsilon)$. Therefore, by Theorem~\ref{thm:UTnmult}, the coefficient of $\chi_{\bT}(\bT/H_{*}^{+})$ in $\deg^{\nabla}_{\bT}\bigl(-\mathrm{Id},B_{\delta}(\cW(\lambda_{0}-\varepsilon))\bigr)$ is $0$. The same happens for that coefficient in $\deg^{\nabla}_{\bT}\bigl((\nabla\Phi_{\pm})|_{\bH_0},B_{\delta}(\bH_0)\bigr)$ by Remark~\ref{rem:lift-degree-to-big-torus}, since $\nu_{k_0}\neq 0$.

Hence, using \eqref{eq:deg-id} and \eqref{eq:cW-piecewise}, we obtain that the coefficient of $\chi_{\bT}(\bT/H_{*}^{+})$ in $\bif_{\bT}(\lambda_{0})$ is
\[
n_{0}(-1)^{\dim\cW(\lambda_{0}-\varepsilon)}(-1)^{\dim\cV(\lambda_{0})+1}m_{H_*}
= -n_{0}(-1)^{\dim\cW(\lambda_{0}+\varepsilon)}m_{H_*}\neq 0,
\]
that is, the $\bI$–coefficients of the first two factors multiplied by the coefficient of $\chi_{\bT}(\bT/H_{*}^{+})$ in the third, where $m_{H_*}\ge 1$ is the multiplicity of $\bR[1,(\mu_{j_0},\nu_{k_0})]$ in $\cV(\lambda_0)$.

On the other hand, since $\bR[1,(\mu_{j_0},\nu_{k_0})]$ does not occur in any $\cV(\widehat{\lambda})$ for $\widehat{\lambda}<\lambda_0$, no other bifurcation index $\bif_{\bT}(\widehat{\lambda})$ in \eqref{eq:sub_bif_T_reorg} contributes to $\chi_{\bT}(\bT/H_{*}^{+})$. Hence the sum in \eqref{eq:sub_bif_T_reorg} cannot vanish, a contradiction. Therefore $\mathcal{C}(\lambda_0)$ is unbounded.

For $\lambda_0=0$, if $p$ is odd, Theorem~\ref{thm:altRab_sum_T} implies that either $\mathcal{C}(0)$ is unbounded or else $\mathcal{C}(0)$ meets $(0,\lambda_1)$ for some $\lambda_1\in\Lambda\setminus\{0\}$. Since each $\mathcal{C}(\lambda_1)$ with $\lambda_1\ne 0$ is unbounded, the latter possibility forces $\mathcal{C}(0)$ to be unbounded as well.
\end{proof}

\begin{Remark}
The nondegeneracy of $0$ can be replaced by the requirement that, in \textnormal{(B6)}, the degree of $\nabla_{u}F$ has a nonzero $\bI$–coefficient for all $\lambda\in\bR$; this is the only property used in the proof.

If, however, this coefficient is $0$, one must impose assumptions substantially stronger than \textnormal{(E)} on the spectral properties of the matrix $A$ to prevent cancellations among combinations of products that reproduce the effect of the factor $\chi_{\bT}(\bT/H_{*}^{+})$. In the proof above we relied on the fact that $\chi_{\bT}(\bT/H_{*}^{+})$ appears in exactly one factor and is multiplied by the $\bI$–coefficient coming from the other two factors. This mechanism breaks down in general when the $\bI$–coefficient vanishes.
\end{Remark}

\begin{Remark}
Theorem \ref{thm:unbounded} extends the unboundedness results of \cite{RybickiLB}, \cite{RybSte2015} (spheres) and \cite{Stefaniak2025} (a symmetric space), where the linearization was restricted by the assumption $A=\mathrm{Id}$ (cf. \textnormal{(B5)}), hence admitting only a single eigenvalue. Here we remove that restriction and allow $A$ with multiple eigenvalues, while adopting the convenient cooperativity condition (all Laplace--Beltrami terms have the same sign). Under this alternative structural assumption and mild, verifiable assumptions, unboundedness follows.
\end{Remark}

\end{document}